\newcommand{\Q}{{\mathbb A}}
\newcommand{\A}{\mathcal{A}}  
\newtheorem{theorem}{Theorem}
\newtheorem{lemma}[theorem]{Lemma}
\newtheorem{remark}[theorem]{Remark}
\newtheorem{definition}[theorem]{Definition}
\newtheorem*{theoremA}{Theorem A}
\newtheorem*{theoremB}{Theorem B}
\newenvironment{Proof}[1][Proof.]{\begin{trivlist}
\item[\hskip \labelsep {\bfseries #1}]}{\flushright
$\Box$\end{trivlist}}
\newcommand{\la}{\langle}
\newcommand{\ra}{\rangle}
\newcommand{\0}{\theta}
\begin{document}

\noindent{\Large
The algebraic and geometric classification of 
antiassociative algebras}
   \medskip

   {\bf  
   Renato Fehlberg J\'{u}nior,$^{a}$
    Ivan Kaygorodov$^{b}$   $\&$
    Crislaine Kuster$^{c}$\\

    \medskip
}

{\tiny

$^{a}$  Departamento de Matemática, Universidade Federal do Espírito Santo, Vitória - ES, Brazil

$^{b}$ Centro de Matemática e Aplicações, Universidade da Beira Interior, Covilhã, Portugal

$^{c}$ Instituto de Matemática Pura e Aplicada, Rio de Janeiro - RJ, Brazil

\medskip

   E-mail addresses:

\smallskip Renato Fehlberg J\'{u}nior (renato.fehlberg@ufes.br)

\smallskip    Ivan Kaygorodov (kaygorodov.ivan@gmail.com)

Crislaine Kuster (crislainekeizy@gmail.com)
}

\bigskip 

\medskip 

\medskip

\noindent{\bf Abstract}:
{\it This paper is devoted to the complete algebraic and geometric classification of complex $4$ and $5$-dimensional antiassociative algebras.
In particular, 
we proved that 
the variety of complex $4$-dimensional antiassociative algebras has dimension $12$ and it is defined by three irreducible components
(in particular, there is only $1$ rigid algebra in this variety);
the variety of complex $5$-dimensional antiassociative algebras has dimension $24$ and it is defined by $8$ irreducible components
(in particular, there are only $4$ rigid algebras in this variety). 
 }
 
\medskip

\noindent {\bf Keywords}:
{\it antiassociative  algebra, algebraic classification, central extension, geometric classification, degeneration.}

\medskip

\noindent {\bf MSC2020}:  17A30, 14D06, 14L30.

 \medskip

 \medskip

\section*{Introduction}
The algebraic classification (up to isomorphism) of algebras of dimension $n$ from a certain variety
defined by a certain family of polynomial identities is a classic problem in the theory of non-associative algebras.
There are many results related to the algebraic classification of small-dimensional algebras in the varieties of
Jordan, Lie, Leibniz, Zinbiel and many other algebras \cite{    kkl20,    hac16,         degr2,        kkp20}.
 Geometric properties of a variety of algebras defined by a family of polynomial identities have been an object of study since 1970's (see, \cite{wolf2,      chouhy,     BC99,    aleis2,   gabriel,   ckls, cibils,  shaf, GRH, GRH2, ale3,        kppv}). 
 Gabriel described the irreducible components of the variety of $4$-dimensional unital associative algebras~\cite{gabriel}.  
 Cibils considered rigid associative algebras with $2$-step nilpotent radical \cite{cibils}.
 Burde and Steinhoff  constructed the graphs of degenerations for the varieties of    $3$-dimensional and $4$-dimensional Lie algebras~\cite{BC99}. 
 Grunewald and O'Halloran  calculated the degenerations for the variety of $5$-dimensional nilpotent Lie algebras~\cite{GRH}. 
Chouhy  proved that  in the case of finite-dimensional associative algebras,
 the $N$-Koszul property is preserved under the degeneration relation~\cite{chouhy}.
Degenerations have also been used to study a level of complexity of an algebra~\cite{  wolf2}.
 The study of degenerations of algebras is very rich and closely related to deformation theory, in the sense of Gerstenhaber \cite{ger63}.
 
In the present paper, we give the algebraic  and geometric classification of
complex $4$-dimensional and $5$-dimensional antiassociative algebras.
An antiassociative algebra is  defined by the following identity $(xy)z+x(yz)=0.$
Thanks to \cite{xyzw=0}, each antiassociative algebra is a $3$-nilpotent algebra and we can use the standard method of classification of nilpotent algebras first appeared in \cite{ss78}.
On the other hand, the variety of antiassociative algebras  is a particular case of $(\alpha,\beta,\gamma)$-algebras considered  by Bergen and Grzeszczuk in \cite{BG1,BG2}, and by Calderón in \cite{calderon}.
It is also a particular case of Lie type algebras considered by Makarenko in \cite{natalia}, 
which is a particular case of $(\alpha,\beta,\gamma)$-algebras.
It looks that antiassociative algebras were initially introduced in  \cite{Shestakov}. 
Later, they appeared in several articles, both with a study of theirs properties, see \cite{xyzw=0,HoHA20}, as well as with theirs relations with others algebras,  like Jordan--Lie superalgebras or  CB-algebras  \cite{xyzw=0,cb-algebras}. In particular, an  anticommutative algebra $A$ is a  CB-algebra if, and only if, $A$ is an antiassociative algebra \cite{cb-algebras}. García-Martínez and  Van der Linden proved that
 an algebraically coherent variety of anti-commutative algebras is either a variety of Lie algebras or a variety of antiassociative algebras \cite{linden}.
They are also related to anticommutative $g$-alternative algebras \cite{CTL019}. 
Moreover, given an antiassociative algebra $A$, the new algebra $A^{(+)}$ is a Mock-Lie algebra,
which is a Jordan algebra of nilindex 3.
A Dual Mock-Lie algebra  is an anticommutative antiassociative algebra  \cite{Zusmanovich}.
Algebraic and geometric classifications of small dimensional dual Mock-Lie algebras are given in \cite{ckls}.
Another entertaining fact about these algebras is that  Markl and Remm showed the non-Koszulness of the corresponding operad for antiassociative algebras  \cite{MR014}, hence the deformation cohomology differs from the “standard” one, more precisely, the deformation cohomology coincides with the triple cohomology. Futhermore, antiassocitive algebras  playing an important  role in gauge theory, in particular, nonlinear gauge theory for two or more spin-2 fields \cite{Stephen02}.

Since  any product involving four or more elements of an antiassociative algebra is identically zero, (see, \cite[Lemma 1.1]{xyzw=0}), we can describe this variety in two  subsets: algebras that satisfies the identity $xyz=0$, called 2-step nilpotent  algebras, and algebras that do not satisfies the identity $xyz=0$, called non-2-step nilpotent algebras. Note that 2-step nilpotent algebras are contained in the intersection of varieties of algebras defined by families of polynomial identities of degree $3$, for example, Leibniz, associative and Zinbiel algebras. Then, the classification of antiassociative  2-step nilpotent algebras of dimension up to $4$ can be extracted from the list of algebras presented in \cite{kppv}. 
The algebraic classification of all $5$-dimensional $2$-step nilpotent algebras is a wild problem and it is not concluded until now, 
but there is a geometric classification of these algebras \cite{ikp20}, 
which will be used in the present work for a geometric classification of $5$-dimensional antiassociative algebras. 
The method for classifying antiassociative algebras is based on the calculation of central extensions of nilpotent algebras of smaller dimensions from the same variety.
The algebraic study of central extensions of   algebras has been an important topic for years \cite{  calferk,klp20,hac16,  ss78}.
First, Skjelbred and Sund used central extensions of Lie algebras to obtain a classification of nilpotent Lie algebras  \cite{ss78}.
Note that the Skjelbred-Sund method of central extensions is an important tool in the classification of nilpotent algebras.
Using the same method,  
small dimensional nilpotent 
(terminal  \cite{kkp20},  Lie  \cite{ degr2},   anticommutative  \cite{kkl20}, dual Mock-Lie   \cite{ckls}) algebras 
and some others have been described.
Our main results related to the algebraic classification of cited varieties are summarized below.

\begin{theoremA}
Up to isomorphism, there is only $1$ complex  $3$-dimensional  (non-2-step nilpotent) antiassociative   algebras, 
described explicitly  in  section \ref{4dim}.
Up to isomorphism, there are only $3$ complex  $4$-dimensional  (non-2-step nilpotent) antiassociative   algebras, 
described explicitly  in  section \ref{4dim}.
Up to isomorphism, there are infinitely many isomorphism classes of  
complex  $5$-dimensional   (non-2-step nilpotent) antiassociative   algebras, 
described explicitly  in  section \ref{5dim} in terms of 
$4$ one-parameter families and 
$26$ additional isomorphism classes.

\end{theoremA}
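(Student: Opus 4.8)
The plan is to prove all three assertions by a single inductive application of the Skjelbred--Sund central extension method \cite{ss78}, exploiting the fact (from \cite[Lemma 1.1]{xyzw=0}) that every antiassociative algebra is $3$-nilpotent. A non-$2$-step nilpotent antiassociative algebra $B$ carries a nonzero triple product, so its annihilator is nonzero; quotienting by a central ideal contained in the annihilator yields an antiassociative algebra of smaller dimension, and $B$ is recovered from that quotient as a central extension. Since a nonzero triple product forces dimension at least $3$ in the nilpotent setting, every $1$- and $2$-dimensional antiassociative algebra is $2$-step nilpotent, so the induction starts in dimension $3$. First I would set up the relevant second cohomology: for an antiassociative algebra $A$ and the trivial module $\CC$, a bilinear form $\theta\colon A\times A\to\CC$ endows $A\oplus\CC c$ with an antiassociative product $x\cdot y=xy+\theta(x,y)c$ (with $c$ central) exactly when $\theta(xy,z)+\theta(x,yz)=0$ for all $x,y,z\in A$; these form the cocycle space $Z^2(A)$. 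The coboundaries $B^2(A)=\{(x,y)\mapsto f(xy): f\in A^{*}\}$ give isomorphic (shifted) extensions, and one sets $H^2(A)=Z^2(A)/B^2(A)$.

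The group $\aut{(A)}$ acts on $H^2(A)$, and the $s$-dimensional central extensions of $A$ correspond, up to the $\mathrm{GL}_s$-action, to $\aut{(A)}$-orbits of $s$-dimensional subspaces $\langle[\theta_1],\dots,[\theta_s]\rangle\subseteq H^2(A)$ satisfying the non-degeneracy condition that $\bigcap_i\ker\theta_i$ meets $\mathrm{Ann}(A)$ trivially. This condition guarantees that the extension has an annihilator of the expected dimension and is not a split extension by a trivial central factor, so that each target algebra is produced with the correct base. For the inductive step I would run the following loop: for each relevant base algebra $A$ I compute $Z^2(A)$, $B^2(A)$, $H^2(A)$ and $\aut{(A)}$, and then enumerate the orbits of admissible subspaces. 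Passing from dimension $3$ to $4$ to $5$, every non-$2$-step nilpotent algebra is obtained exactly once in this way; the $2$-step nilpotent extensions (those whose cocycles do not create a triple product) are discarded, since for dimensions $\le 4$ they are already covered by \cite{kppv} and are not the object of this theorem.

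The main obstacle is the orbit analysis under $\aut{(A)}$, which is where the qualitative jump to dimension $5$ appears. In dimensions $3$ and $4$ the cohomology is small and the automorphism group acts with finitely many orbits on the admissible subspaces, yielding the single $3$-dimensional and the three $4$-dimensional algebras claimed. In dimension $5$, by contrast, $\aut{(A)}$ fails to act with finitely many orbits on the Grassmannian of admissible subspaces of $H^2(A)$: whole one-parameter families of subspaces remain pairwise non-equivalent, which is precisely the source of the $4$ one-parameter families, while the remaining finitely many orbits account for the $26$ sporadic classes.

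The delicate points, which I would settle by explicit change-of-basis computations realizing the $\aut{(A)}$-action, are twofold. First, I must normalize the parameters so that distinct values within a family give genuinely non-isomorphic algebras, and identify the finitely many parameter coincidences and degenerate values that collapse onto sporadic classes. Second, I must verify completeness, namely that every admissible orbit has been listed and none double-counted; this I organize as a case analysis indexed by the isomorphism type of the base algebra and the dimension of the central extension. The full enumeration, including the explicit cocycle representatives and the resulting multiplication tables, is carried out in Sections \ref{4dim} and \ref{5dim}.
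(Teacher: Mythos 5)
Your proposal is correct and follows essentially the same route as the paper: the Skjelbred--Sund central extension method, using $3$-nilpotency of antiassociative algebras to guarantee a nonzero annihilator, computing ${\rm H^2}$, ${\rm Aut}$, and the orbits of admissible subspaces for each base algebra, and bootstrapping the $2$-step nilpotent bases from \cite{fkkv} and \cite{kppv}. The only (immaterial) difference is that the paper imports the dimension-$3$ classification directly from \cite{fkkv} rather than re-deriving it as extensions of $2$-dimensional algebras, and it organizes the dimension-$5$ case explicitly as $2$-dimensional extensions of ${\A}_{3,1}$ plus $1$-dimensional extensions of the $4$-dimensional $2$-step nilpotent algebras, exactly as your framework prescribes.
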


 The degenerations between the (finite-dimensional) algebras from a certain variety $\mathfrak{V}$ defined by a set of identities have been actively studied in the past decade.
The description of all degenerations allows one to find the so-called rigid algebras and families of algebras, i.e. those whose orbit closures under the action of the general linear group form irreducible components of $\mathfrak{V}$
(with respect to the Zariski topology). 
We list here some varieties in which the rigid algebras of the varieties of
all $4$-dimensional Leibniz algebras,
all $4$-dimensional nilpotent terminal algebras \cite{kkp20},
all $4$-dimensional nilpotent commutative algebras,
all $6$-dimensional nilpotent binary Lie algebras,
all $6$-dimensional nilpotent anticommutative algebras \cite{kkl20},
all $8$-dimensional dual Mock Lie algebras \cite{ckls}
have been found.
A full description of degenerations has been obtained  
for $2$-dimensional algebras, 
for $4$-dimensional Lie algebras in \cite{BC99},
for $4$-dimensional Zinbiel and  $4$-dimensional nilpotent Leibniz algebras in \cite{kppv},
for $5$-dimensional nilpotent Lie algebras in \cite{GRH},  
for $8$-dimensional $2$-step nilpotent anticommutative algebras \cite{ale3},
for $(n+1)$-dimensional $n$-Lie algebras, and some others.
Our main results related to the geometric classification of cited varieties are summarized below. 
\begin{theoremB}
The variety of complex $4$-dimensional  antiassociative algebras has 
dimension {\it 12 }  and it has 
3 irreducible components (in particular, there is only one  rigid algebra in this variety). 
The variety of complex $5$-dimensional   antiassociative  algebras has 
dimension {\it 24 }  and it has 
$8$ irreducible components (in particular, there are only $4$ rigid algebras in this variety). 

\end{theoremB}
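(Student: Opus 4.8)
The plan is to prove Theorem B by combining the algebraic classification from Theorem A with a systematic analysis of degenerations via the action of the general linear group. The starting point is the observation, already established, that any antiassociative algebra is $3$-nilpotent and splits into the $2$-step nilpotent part and the non-$2$-step nilpotent part. First I would compute the dimension of the variety. For the $4$-dimensional case, I would use the orbit-dimension formula: for each algebra $A$ in the classification, $\dim \orb(A) = n^2 - \dim \aut(A)$, where $n=4$ and $\aut(A)$ is the automorphism group (equivalently, derivation algebra dimension governs this). The dimension of the whole variety is then the maximum of $\dim \orb(A)$ taken over all algebras, plus a check against the dimensions of the known $2$-step nilpotent components extracted from \cite{kppv}. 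Establishing that the variety has dimension exactly $12$ (resp.\ $24$ in dimension $5$) amounts to exhibiting an algebra, or a family, whose orbit closure realizes this dimension and verifying no component exceeds it.

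Next I would identify the irreducible components. Each component is the closure of either a single orbit (a rigid algebra) or of a parametric family of orbits of maximal dimension. I would begin by listing all candidate algebras and one-parameter families from Theorem A together with the $2$-step nilpotent data, then compute orbit dimensions for all of them. The rigid algebras are those whose orbits are not contained in the closure of any other orbit or family; to certify rigidity I would check that the second cohomology relevant to deformations vanishes, or more directly that the orbit has maximal dimension and is not dominated. For the $4$-dimensional case I expect exactly one rigid algebra and two further components coming from families (one of which should be inherited from the $2$-step nilpotent stratum), giving $3$ components total; for the $5$-dimensional case I expect $4$ rigid algebras and $4$ family-components, giving $8$.

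The core technical work is establishing the degeneration and non-degeneration relations among all the algebras and families. For each claimed degeneration $A \to B$ I would exhibit an explicit parametrized family of basis changes $g_t \in \mathrm{GL}_n(\CC(t))$ such that the structure constants of $g_t \cdot A$ converge to those of $B$ as $t \to 0$. For each claimed non-degeneration $A \not\to B$ I would invoke closed-invariant arguments: find a $\mathrm{GL}_n$-invariant subset (typically defined by inequalities on dimensions of $\aut$, annihilators, squares $A^2$, or other $\orb$-closed conditions) that contains $B$ but excludes $A$, so that $B \notin \overline{\orb(A)}$. Assembling these into a complete Hasse diagram of the partial order on orbit closures then pins down exactly which closures are maximal, i.e.\ the irreducible components.

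The hard part will be the non-degeneration arguments and the correct treatment of the parametric families. Degenerations are verified by an explicit witness, but ruling out a degeneration requires exhibiting the right invariant, and with the $2$-step nilpotent stratum being a wild problem in dimension $5$ one cannot list all orbits; here I would instead rely on the existing geometric classification of $5$-dimensional $2$-step nilpotent algebras from \cite{ikp20} and only analyze how the non-$2$-step nilpotent orbits degenerate into that stratum. A further delicate point is that a one-parameter family may itself degenerate into (or be absorbed by) another family or a rigid algebra, so before declaring a family-closure to be a component I must verify its generic orbit dimension is maximal among the members it is not dominated by and that the family is genuinely non-constant up to isomorphism. Once every edge of the degeneration graph is settled, reading off the dimension and the irreducible components is a bookkeeping matter.
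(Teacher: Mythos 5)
Your proposal is correct and follows essentially the same route as the paper: both rely on the known geometric classification of the $2$-step nilpotent strata (\cite{kppv} in dimension $4$, \cite{ikp20} in dimension $5$), exhibit explicit parametrized bases for the required degenerations, rule out the rest by invariants (the paper uses derivation-algebra dimensions and its Borel-subgroup lemma), and determine the components by comparing dimensions of orbit and family closures. Your caveat that a family may absorb another component is exactly the decisive point in dimension $5$, where the $2$-step nilpotent component $\mathfrak{V}_{2+3}$ of \cite{ikp20} is swallowed by $\overline{\{\mathcal{O}(\Q_{5,26}^{\lambda})\}}$, leaving $8$ components; note only that the top dimension $24$ is realized by the family closure $\overline{\{\mathcal{O}(\mathfrak{V}_{3+2})\}}$ and not by any single orbit (all have dimension at most $20$), so the ``maximum of orbit dimensions'' must include the parameters of each family, as you yourself acknowledge later.
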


\section{The algebraic classification of antiassociative  algebras}
\subsection{Method of classification of nilpotent algebras}
Throughout this paper, we use the notations and methods well written in \cite{  hac16},
which we have adapted for the antiassociative case with some modifications.
Further in this section we give some important definitions.

Let $({\bf A}, \cdot)$ be a complex  antiassociative   algebra 
and $\mathbb V$ be a complex  vector space. The $\mathbb C$-linear space ${\rm Z^{2}}\left(
\bf A,\mathbb V \right) $ is defined as the set of all  bilinear maps $\theta  \colon {\bf A} \times {\bf A} \longrightarrow {\mathbb V}$ such that
$$ \theta(xy,z)+\theta(x,yz)=0.$$
These elements will be called {\it cocycles}. 

For a linear map $f$ from $\bf A$ to  $\mathbb V$, if we define $\delta f\colon {\bf A} \times
{\bf A} \longrightarrow {\mathbb V}$ by $\delta f  (x,y ) =f(xy )$, then $\delta f\in {\rm Z^{2}}\left( {\bf A},{\mathbb V} \right) $. We define ${\rm B^{2}}\left({\bf A},{\mathbb V}\right) =\left\{ \theta =\delta f\ : f\in {\rm Hom}\left( {\bf A},{\mathbb V}\right) \right\} $.
We define the {\it second cohomology space} ${\rm H^{2}}\left( {\bf A},{\mathbb V}\right) $ as the quotient space ${\rm Z^{2}}
\left( {\bf A},{\mathbb V}\right) \big/{\rm B^{2}}\left( {\bf A},{\mathbb V}\right) $.

\

Let $\operatorname{Aut}({\bf A}) $ be the automorphism group of  ${\bf A} $ and let $\phi \in \operatorname{Aut}({\bf A})$. For $\theta \in
{\rm Z^{2}}\left( {\bf A},{\mathbb V}\right) $ define  the action of the group $\operatorname{Aut}({\bf A}) $ on ${\rm Z^{2}}\left( {\bf A},{\mathbb V}\right) $ by $\phi \theta (x,y)
=\theta \left( \phi \left( x\right) ,\phi \left( y\right) \right) $.  It is easy to verify that
 ${\rm B^{2}}\left( {\bf A},{\mathbb V}\right) $ is invariant under the action of $\operatorname{Aut}({\bf A}).$  
 So, we have an induced action of  $\operatorname{Aut}({\bf A})$  on ${\rm H^{2}}\left( {\bf A},{\mathbb V}\right)$.

\

Let $\bf A$ be a  antiassociative  algebra of dimension $m$ over  $\mathbb C$ and ${\mathbb V}$ be a $\mathbb C$-vector
space of dimension $k$. For the bilinear map $\theta$, define on the linear space ${\bf A}_{\theta } = {\bf A}\oplus {\mathbb V}$ the
bilinear product `` $\left[ -,-\right] _{{\bf A}_{\theta }}$'' by $\left[ x+x^{\prime },y+y^{\prime }\right] _{{\bf A}_{\theta }}=
 xy +\theta(x,y) $ for all $x,y\in {\bf A},x^{\prime },y^{\prime }\in {\mathbb V}$.
The algebra ${\bf A}_{\theta }$ is called a $k$-{\it dimensional central extension} of ${\bf A}$ by ${\mathbb V}$. One can easily check that ${\bf A_{\theta}}$ is a  antiassociative
algebra if and only if $\theta \in {\rm Z^2}({\bf A}, {\mathbb V})$.

Call the
set $\operatorname{Ann}(\theta)=\left\{ x\in {\bf A}:\theta \left( x, {\bf A} \right)+ \theta \left({\bf A} ,x\right) =0\right\} $
the {\it annihilator} of $\theta $. We recall that the {\it annihilator} of an  algebra ${\bf A}$ is defined as
the ideal $\operatorname{Ann}(  {\bf A} ) =\left\{ x\in {\bf A}:  x{\bf A}+ {\bf A}x =0\right\}$. Observe
 that
$\operatorname{Ann}\left( {\bf A}_{\theta }\right) =(\operatorname{Ann}(\theta) \cap\operatorname{Ann}({\bf A}))
 \oplus {\mathbb V}$.

\

The following result shows that every algebra with a non-zero annihilator is a central extension of a smaller-dimensional algebra.

\begin{lemma}
Let ${\bf A}$ be an $n$-dimensional  antiassociative algebra such that $\dim (\operatorname{Ann}({\bf A}))=m\neq0$. Then there exists, up to isomorphism, a unique $(n-m)$-dimensional  antiassociative  algebra ${\bf A}'$ and a bilinear map $\theta \in {\rm Z^2}({\bf A'}, {\mathbb V})$ with $\operatorname{Ann}({\bf A'})\cap\operatorname{Ann}(\theta)=0$, where $\mathbb V$ is a vector space of dimension m, such that ${\bf A} \cong {{\bf A}'}_{\theta}$ and
 ${\bf A}/\operatorname{Ann}({\bf A})\cong {\bf A}'$.
\end{lemma}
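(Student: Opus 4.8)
The plan is to produce the pair $(\mathbf{A}', \theta)$ explicitly from the ideal $\operatorname{Ann}(\mathbf{A})$ and then read off all the required properties. First I would set $\mathbf{A}' = \mathbf{A}/\operatorname{Ann}(\mathbf{A})$, which is an $(n-m)$-dimensional algebra; since $\operatorname{Ann}(\mathbf{A})$ is an ideal, the quotient product is well defined, and because antiassociativity is an identity it is inherited by quotients, so $\mathbf{A}'$ is again antiassociative. I would fix a vector-space complement $W$ with $\mathbf{A} = W \oplus \operatorname{Ann}(\mathbf{A})$, so that the quotient map restricts to a linear isomorphism $\pi\colon W \to \mathbf{A}'$. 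For $x, y \in W$ I decompose the product in $\mathbf{A}$ as $xy = p(x,y) + \theta_0(x,y)$ with $p(x,y) \in W$ and $\theta_0(x,y) \in \operatorname{Ann}(\mathbf{A})$; transporting along $\pi$ recovers the product of $\mathbf{A}'$ from $p$, and I would define $\theta \colon \mathbf{A}' \times \mathbf{A}' \to \mathbb{V} := \operatorname{Ann}(\mathbf{A})$ by $\theta(\pi x, \pi y) = \theta_0(x,y)$.

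Next I would check that $\theta$ is a cocycle and that $\mathbf{A} \cong \mathbf{A}'_\theta$. For the cocycle condition, take $x,y,z \in W$ and compute the $\operatorname{Ann}(\mathbf{A})$-component of both $(xy)z$ and $x(yz)$ in $\mathbf{A}$; since $\theta_0(x,y)$ and $\theta_0(y,z)$ lie in the annihilator, they are killed under the outer multiplication, and the annihilator-components come out to be exactly $\theta(\pi x\,\pi y, \pi z)$ and $\theta(\pi x, \pi y\,\pi z)$. The antiassociativity identity $(xy)z + x(yz) = 0$ in $\mathbf{A}$ then projects to $\theta(\pi x\,\pi y, \pi z) + \theta(\pi x, \pi y\,\pi z) = 0$, so $\theta \in {\rm Z^2}(\mathbf{A}', \mathbb{V})$. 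The linear bijection $\Phi\colon \mathbf{A} \to \mathbf{A}'_\theta$ sending $w + a$ (with $w \in W$, $a \in \operatorname{Ann}(\mathbf{A})$) to $\pi w + a$ is then an algebra isomorphism, because on products the annihilator summands drop out and $\Phi(xy) = \pi(p(x,y)) + \theta_0(x,y) = (\pi x)(\pi y) + \theta(\pi x, \pi y) = [\Phi x, \Phi y]_{\mathbf{A}'_\theta}$.

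For the nondegeneracy $\operatorname{Ann}(\mathbf{A}') \cap \operatorname{Ann}(\theta) = 0$, I would take $\pi x$ in this intersection with $x \in W$. Membership in $\operatorname{Ann}(\mathbf{A}')$ forces $p(x,y) = p(y,x) = 0$ for all $y \in W$ (since $W \cap \operatorname{Ann}(\mathbf{A}) = 0$), while membership in $\operatorname{Ann}(\theta)$ forces $\theta_0(x,y) = \theta_0(y,x) = 0$; together these give $xy = yx = 0$ for every $y \in W$, and products with $\operatorname{Ann}(\mathbf{A})$ vanish trivially, so $x \in \operatorname{Ann}(\mathbf{A}) \cap W = 0$ and hence $\pi x = 0$.

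The delicate point, and the one I would treat most carefully, is uniqueness; here the lever is the Observation recorded above the statement, namely $\operatorname{Ann}(\mathbf{B}_\psi) = (\operatorname{Ann}(\psi) \cap \operatorname{Ann}(\mathbf{B})) \oplus \mathbb{V}$ for any central extension. Suppose $\mathbf{B}$ is an $(n-m)$-dimensional antiassociative algebra and $\psi \in {\rm Z^2}(\mathbf{B}, \mathbb{V})$ satisfies $\operatorname{Ann}(\mathbf{B}) \cap \operatorname{Ann}(\psi) = 0$ with $\mathbf{A} \cong \mathbf{B}_\psi$. Then the Observation collapses to $\operatorname{Ann}(\mathbf{B}_\psi) = \mathbb{V}$, whence $\mathbf{B}_\psi/\operatorname{Ann}(\mathbf{B}_\psi) \cong \mathbf{B}$. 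Applying the isomorphism-invariance of the quotient by the annihilator, $\mathbf{B} \cong \mathbf{A}/\operatorname{Ann}(\mathbf{A}) \cong \mathbf{A}'$, which is exactly the asserted uniqueness up to isomorphism; the identification $\mathbf{A}/\operatorname{Ann}(\mathbf{A}) \cong \mathbf{A}'$ is immediate from the construction. The main obstacle is thus not any single computation but ensuring that the nondegeneracy condition $\operatorname{Ann}(\mathbf{A}') \cap \operatorname{Ann}(\theta) = 0$ is precisely what forces the annihilator of the extension to reduce to $\mathbb{V}$, so that passing to the quotient recovers $\mathbf{A}'$ canonically.
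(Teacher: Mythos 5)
Your proof is correct and follows essentially the same route as the paper's: fix a linear complement of $\operatorname{Ann}({\bf A})$, split the product into its complement component and its annihilator component, take the former as the product of ${\bf A}'$ and the latter as the cocycle $\theta$, and get uniqueness from ${\bf A}/\operatorname{Ann}({\bf A})\cong {\bf A}'$ together with the displayed identity $\operatorname{Ann}({\bf A}_{\theta})=(\operatorname{Ann}(\theta)\cap\operatorname{Ann}({\bf A}))\oplus {\mathbb V}$. The only difference is presentational: you transport everything to the abstract quotient along $\pi$ and write out the cocycle, isomorphism, and nondegeneracy verifications that the paper compresses into its projection map $P$ and the final ``therefore.''
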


\begin{proof}
Let ${\bf A}'$ be a linear complement of $\operatorname{Ann}({\bf A})$ in ${\bf A}$. Define a linear map $P \colon {\bf A} \longrightarrow {\bf A}'$ by $P(x+v)=x$ for $x\in {\bf A}'$ and $v\in\operatorname{Ann}({\bf A})$, and define a multiplication on ${\bf A}'$ by $[x, y]_{{\bf A}'}=P(x y)$ for $x, y \in {\bf A}'$.
For $x, y \in {\bf A}$, we have
\[P(xy)=P((x-P(x)+P(x))(y- P(y)+P(y)))=P(P(x) P(y))=[P(x), P(y)]_{{\bf A}'}. \]

Since $P$ is a homomorphism $P({\bf A})={\bf A}'$ is an  antiassociative algebra and
 ${\bf A}/\operatorname{Ann}({\bf A})\cong {\bf A}'$, which gives us the uniqueness. Now, define the map $\theta \colon {\bf A}' \times {\bf A}' \longrightarrow\operatorname{Ann}({\bf A})$ by $\theta(x,y)=xy- [x,y]_{{\bf A}'}$.
  Thus, ${\bf A}'_{\theta}$ is ${\bf A}$ and therefore $\theta \in {\rm Z^2}({\bf A'}, {\mathbb V})$ and $\operatorname{Ann}({\bf A'})\cap\operatorname{Ann}(\theta)=0$.
\end{proof}

\

\begin{definition}
Let ${\bf A}$ be an algebra and $I$ be a subspace of $\operatorname{Ann}({\bf A})$. If ${\bf A}={\bf A}_0 \oplus I$
then $I$ is called an {\it annihilator component} of ${\bf A}$.
A central extension of an algebra $\bf A$ without annihilator component is called a {\it non-split central extension}.
\end{definition}

Our task is to find all central extensions of an algebra $\bf A$ by
a space ${\mathbb V}$.  In order to solve the isomorphism problem we need to study the
action of $\operatorname{Aut}({\bf A})$ on ${\rm H^{2}}\left( {\bf A},{\mathbb V}
\right) $. To do that, let us fix a basis $e_{1},\ldots ,e_{s}$ of ${\mathbb V}$, and $
\theta \in {\rm Z^{2}}\left( {\bf A},{\mathbb V}\right) $. Then $\theta $ can be uniquely
written as $\theta \left( x,y\right) =
\displaystyle \sum_{i=1}^{s} \theta _{i}\left( x,y\right) e_{i}$, where $\theta _{i}\in
{\rm Z^{2}}\left( {\bf A},\mathbb C\right) $. Moreover, $\operatorname{Ann}(\theta)=\operatorname{Ann}(\theta _{1})\cap\operatorname{Ann}(\theta _{2})\cap\ldots \cap\operatorname{Ann}(\theta _{s})$. Furthermore, $\theta \in
{\rm B^{2}}\left( {\bf A},{\mathbb V}\right) $\ if and only if all $\theta _{i}\in {\rm B^{2}}\left( {\bf A},
\mathbb C\right) $.
It is not difficult to prove (see \cite[Lemma 13]{hac16}) that given a  antiassociative algebra ${\bf A}_{\theta}$, if we write as
above $\theta \left( x,y\right) = \displaystyle \sum_{i=1}^{s} \theta_{i}\left( x,y\right) e_{i}\in {\rm Z^{2}}\left( {\bf A},{\mathbb V}\right) $ and 
$\operatorname{Ann}(\theta)\cap \operatorname{Ann}\left( {\bf A}\right) =0$, then ${\bf A}_{\theta }$ has an
annihilator component if and only if $\left[ \theta _{1}\right] ,\left[
\theta _{2}\right] ,\ldots ,\left[ \theta _{s}\right] $ are linearly
dependent in ${\rm H^{2}}\left( {\bf A},\mathbb C\right) $.

Let ${\mathbb V}$ be a finite-dimensional vector space over $\mathbb C$. The {\it Grassmannian} $G_{k}\left( {\mathbb V}\right) $ is the set of all $k$-dimensional
linear subspaces of $ {\mathbb V}$. Let $G_{s}\left( {\rm H^{2}}\left( {\bf A},\mathbb C\right) \right) $ be the Grassmannian of subspaces of dimension $s$ in
${\rm H^{2}}\left( {\bf A},\mathbb C\right) $. There is a natural action of $\operatorname{Aut}({\bf A})$ on $G_{s}\left( {\rm H^{2}}\left( {\bf A},\mathbb C\right) \right) $, given by $\phi W=\left\langle \left[ \phi \theta _{1}\right]
,\left[ \phi \theta _{2}\right] ,\dots,\left[ \phi \theta _{s}\right]
\right\rangle $,  for each 
 $\phi \in \operatorname{Aut}({\bf A})$ and each  $W=\left\langle
\left[ \theta _{1}\right] ,\left[ \theta _{2}\right] ,\dots,\left[ \theta _{s}
\right] \right\rangle \in G_{s}\left( {\rm H^{2}}\left( {\bf A},\mathbb C
\right) \right) $. We denote the orbit of $W\in G_{s}\left(
{\rm H^{2}}\left( {\bf A},\mathbb C\right) \right) $ under the action of $\operatorname{Aut}({\bf A})$ by $\operatorname{Orb}(W)$. Given
\[
W_{1}=\left\langle \left[ \theta _{1}\right] ,\left[ \theta _{2}\right] ,\dots,
\left[ \theta _{s}\right] \right\rangle ,W_{2}=\left\langle \left[ \vartheta
_{1}\right] ,\left[ \vartheta _{2}\right] ,\dots,\left[ \vartheta _{s}\right]
\right\rangle \in G_{s}\left( {\rm H^{2}}\left( {\bf A},\mathbb C\right)
\right),
\]
we have that if $W_{1}=W_{2}$, then $ \bigcap\limits_{i=1}^{s}\operatorname{Ann}(\theta _{i})\cap \operatorname{Ann}\left( {\bf A}\right) = \bigcap\limits_{i=1}^{s}
\operatorname{Ann}(\vartheta _{i})\cap\operatorname{Ann}( {\bf A}) $, and therefore we can introduce
the set
\[
{\bf T}_{s}({\bf A}) =\left\{ W=\left\langle \left[ \theta _{1}\right] ,
\left[ \theta _{2}\right] ,\dots,\left[ \theta _{s}\right] \right\rangle \in
G_{s}\left( {\rm H^{2}}\left( {\bf A},\mathbb C\right) \right) : \bigcap\limits_{i=1}^{s}\operatorname{Ann}(\theta _{i})\cap\operatorname{Ann}({\bf A}) =0\right\},
\]
which is stable under the action of $\operatorname{Aut}({\bf A})$.

\

Now, let ${\mathbb V}$ be an $s$-dimensional linear space and let us denote by
${\bf E}\left( {\bf A},{\mathbb V}\right) $ the set of all {\it non-split $s$-dimensional central extensions} of ${\bf A}$ by
${\mathbb V}$. By above, we can write
\[
{\bf E}\left( {\bf A},{\mathbb V}\right) =\left\{ {\bf A}_{\theta }:\theta \left( x,y\right) = \sum_{i=1}^{s}\theta _{i}\left( x,y\right) e_{i} \ \ \text{and} \ \ \left\langle \left[ \theta _{1}\right] ,\left[ \theta _{2}\right] ,\dots,
\left[ \theta _{s}\right] \right\rangle \in {\bf T}_{s}({\bf A}) \right\} .
\]
We also have the following result, which can be proved as in \cite[Lemma 17]{hac16}.

\begin{lemma}
 Let ${\bf A}_{\theta },{\bf A}_{\vartheta }\in {\bf E}\left( {\bf A},{\mathbb V}\right) $. Suppose that $\theta \left( x,y\right) =  \displaystyle \sum_{i=1}^{s}
\theta _{i}\left( x,y\right) e_{i}$ and $\vartheta \left( x,y\right) =
\displaystyle \sum_{i=1}^{s} \vartheta _{i}\left( x,y\right) e_{i}$.
Then the  antiassociative algebras ${\bf A}_{\theta }$ and ${\bf A}_{\vartheta } $ are isomorphic
if and only if
$$\operatorname{Orb}\left\langle \left[ \theta _{1}\right] ,
\left[ \theta _{2}\right] ,\dots,\left[ \theta _{s}\right] \right\rangle =
\operatorname{Orb}\left\langle \left[ \vartheta _{1}\right] ,\left[ \vartheta
_{2}\right] ,\dots,\left[ \vartheta _{s}\right] \right\rangle .$$
\end{lemma}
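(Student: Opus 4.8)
The plan is to follow the standard Skjelbred--Sund argument of \cite[Lemma 17]{hac16}, adapted to the antiassociative cocycle identity, the crucial point being that for a non-split extension the added space ${\mathbb V}$ is intrinsically recognizable as the annihilator. So for the forward implication I would start from an isomorphism $\psi \colon {\bf A}_{\theta} \to {\bf A}_{\vartheta}$ and first observe that, because $\langle[\theta_1],\ldots,[\theta_s]\rangle \in {\bf T}_s({\bf A})$, one has $\operatorname{Ann}(\theta)\cap\operatorname{Ann}({\bf A})=0$; hence by the formula $\operatorname{Ann}({\bf A}_{\theta})=(\operatorname{Ann}(\theta)\cap\operatorname{Ann}({\bf A}))\oplus{\mathbb V}$ recorded above, $\operatorname{Ann}({\bf A}_{\theta})={\mathbb V}$, and likewise $\operatorname{Ann}({\bf A}_{\vartheta})={\mathbb V}$. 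Since any algebra isomorphism carries the annihilator onto the annihilator, $\psi({\mathbb V})={\mathbb V}$.

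Writing $\psi$ in block form with respect to ${\bf A}_{\theta}={\bf A}\oplus{\mathbb V}$, namely $\psi(x)=\phi(x)+\tau(x)$ for $x\in{\bf A}$ with $\phi\colon{\bf A}\to{\bf A}$ and $\tau\colon{\bf A}\to{\mathbb V}$ linear, and $\psi|_{\mathbb V}=\sigma$ invertible on ${\mathbb V}$, I would expand the homomorphism condition $\psi([x,y]_{{\bf A}_{\theta}})=[\psi(x),\psi(y)]_{{\bf A}_{\vartheta}}$. Comparing ${\bf A}$-components shows $\phi\in\operatorname{Aut}({\bf A})$, while comparing ${\mathbb V}$-components yields
\[
\vartheta(\phi(x),\phi(y))=\sigma(\theta(x,y))+\tau(xy).
\]
Since $\tau(xy)=\delta\tau(x,y)$ is a coboundary, passing to ${\rm H^2}({\bf A},{\mathbb V})$ and reading off the $e_j$-components (with $\sigma(e_i)=\sum_j\sigma_{ji}e_j$) gives $[\phi\vartheta_j]=\sum_i\sigma_{ji}[\theta_i]$. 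As $\sigma$ is invertible and the classes $[\theta_i]$ are linearly independent (the non-split condition), this containment of spans becomes the equality of subspaces $\phi\langle[\vartheta_1],\ldots,[\vartheta_s]\rangle=\langle[\theta_1],\ldots,[\theta_s]\rangle$, i.e. $\operatorname{Orb}\langle[\theta_1],\ldots,[\theta_s]\rangle=\operatorname{Orb}\langle[\vartheta_1],\ldots,[\vartheta_s]\rangle$. For the converse I would simply run this computation backwards: given $\phi\in\operatorname{Aut}({\bf A})$ and an invertible matrix $(\sigma_{ji})$ with $\phi\vartheta_j\equiv\sum_i\sigma_{ji}\theta_i$ modulo ${\rm B^2}({\bf A},\mathbb{C})$, choose $\tau$ realizing the coboundary discrepancy and define $\psi(x+v)=\phi(x)+\tau(x)+\sigma(v)$; the displayed relation is exactly what makes $\psi$ multiplicative, products involving ${\mathbb V}$ being zero on both sides, and $\psi$ is bijective since $\phi$ and $\sigma$ are.

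The one place demanding care, rather than a genuine obstacle, is the identification $\operatorname{Ann}({\bf A}_{\theta})={\mathbb V}$ and the consequent $\psi({\mathbb V})={\mathbb V}$: this is where the hypothesis ${\bf A}_{\theta},{\bf A}_{\vartheta}\in{\bf E}({\bf A},{\mathbb V})$, with $\bigcap_i\operatorname{Ann}(\theta_i)\cap\operatorname{Ann}({\bf A})=0$, is indispensable, for without it $\psi$ need not preserve the splitting and the induced map on ${\bf A}\cong{\bf A}_{\theta}/{\mathbb V}$ would not be well defined. The remaining verifications --- that the antiassociative cocycle identity $\theta(xy,z)+\theta(x,yz)=0$ is stable under the $\operatorname{Aut}({\bf A})$- and $\operatorname{GL}({\mathbb V})$-actions, and the bilinear bookkeeping of the block expansion --- are routine.
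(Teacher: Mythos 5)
Your proof is correct and takes essentially the same approach as the paper: the paper gives no independent argument but proves the lemma by reference to \cite[Lemma 17]{hac16}, and what you have written out is precisely that standard Skjelbred--Sund argument (identify ${\mathbb V}$ with $\operatorname{Ann}({\bf A}_{\theta })$ via the ${\bf T}_{s}$ condition, decompose the isomorphism into $\phi$, $\tau$, $\sigma$, and read the ${\mathbb V}$-component of the homomorphism condition in ${\rm H^{2}}({\bf A},\mathbb C)$ to get $\left[ \phi \vartheta _{j}\right] =\sum_{i}\sigma _{ji}\left[ \theta _{i}\right] $, with the converse obtained by running the construction backwards). Your identification of where the non-split hypothesis enters --- forcing $\psi ({\mathbb V})={\mathbb V}$ so that $\phi$ is well defined --- is exactly the key point.
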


This shows that there exists a one-to-one correspondence between the set of $\operatorname{Aut}({\bf A})$-orbits on ${\bf T}_{s}\left( {\bf A}\right) $ and the set of
isomorphism classes of ${\bf E}\left( {\bf A},{\mathbb V}\right) $. Consequently, we have a
procedure that allows us, given a  antiassociative algebra ${\bf A}'$ of
dimension $n-s$, to construct all non-split central extensions of ${\bf A}'$. This procedure is:

\begin{enumerate}
\item For a given  antiassociative algebra ${\bf A}'$ of dimension $n-s $, determine ${\rm H^{2}}( {\bf A}',\mathbb {C}) $, $\operatorname{Ann}({\bf A}')$ and $\operatorname{Aut}({\bf A}')$.

\item Determine the set of $\operatorname{Aut}({\bf A}')$-orbits on ${\bf T}_{s}({\bf A}') $.

\item For each orbit, construct the  antiassociative algebra associated with a
representative of it.
\end{enumerate}

\medskip

\subsubsection{Notations}
Let us introduce the following notations. Let ${\bf A}$ be a nilpotent algebra with
a basis $\{e_{1},e_{2}, \ldots, e_{n}\}.$ Then by $\Delta_{ij}$\ we will denote the
bilinear form
$\Delta_{ij}:{\bf A}\times {\bf A}\longrightarrow \mathbb C$
with $\Delta_{ij}(e_{l},e_{m}) = \delta_{il}\delta_{jm}.$
The set $\left\{ \Delta_{ij}:1\leq i, j\leq n\right\}$ is a basis for the linear space of
bilinear forms on ${\bf A},$ so every $\theta \in
{\rm Z^2}({\bf A},\bf \mathbb V )$ can be uniquely written as $
\theta = \displaystyle \sum_{1\leq i,j\leq n} c_{ij}\Delta _{{i}{j}}$, where $
c_{ij}\in \mathbb C$.
We will use the following notations for our algebras:
$$\begin{array}{lll}
{\A}_{i,j}& \mbox{---}& j\mbox{th }i\mbox{-dimensional  $2$-step nilpotent algebra;} \\
{\Q}_{i,j}& \mbox{---}& j\mbox{th }i\mbox{-dimensional non-2-step nilpotent  antiassociative algebra.}
\end{array}$$

\subsection{The algebraic classification of  $4$-dimensional non-2-step nilpotent antiassociative algebras}\label{4dim}
Thanks to \cite{fkkv}, we have the classification of all 3-dimensional nilpotent algebras.
From this list we are choosing only $3$-dimensional antiassociative algebras: 

\begin{longtable}{ll llllllllllll}
${\A}_{3,1}$ &$:$& $e_1 e_1 = e_2$\\
${\A}_{3,2}$  &$:$& $e_1 e_2=e_3$  & $e_2 e_1=-e_3$   \\
${\A}_{3,3}$ &$:$&  $e_1 e_1 = e_3$  & $e_2 e_2=e_3$  \\
${\A}^{\lambda}_{3,4}$  &$:$& $e_1 e_1 = \lambda e_3$   & $e_2 e_1=e_3$  & $e_2 e_2=e_3$ \\
${\Q}_{3,1}$ &$:$& $e_1 e_1 = e_2$ & $e_1 e_2=e_3$ & $e_2 e_1= -e_3$
\end{longtable}

Note that the central extension with annihilator component of the non-$2$-step nilpotent algebra ${\Q}_{3,1}$, give us a four dimensional algebra denoted by ${\Q}_{4,1}$.

\
In the following table we give the description of the second cohomology space of  $3$-dimensional  $2$-step nilpotent  algebras.

\begin{longtable}{|lcl|}
                \hline
 
${\rm H}^2({\A}_{3,1})$& $=$& 
$\Big\langle 
 [\Delta_{12}]-[\Delta_{21}],[\Delta_{13}],[\Delta_{31}], [\Delta_{33}]
\Big\rangle$    \\ \hline

${\rm H}^2({\A}_{3,2})$& $=$& 
$\Big\langle 
[\Delta_{11}], [\Delta_{12}],  [ \Delta_{22}]
\Big\rangle$     \\ \hline

${\rm H}^2({\A}_{3,3})$& $=$& 
$ \Big\langle [\Delta_{11}] ,[\Delta_{12}],[ \Delta_{21}]  \Big\rangle$

    \\ \hline

${\rm H}^2({\A}^{\lambda}_{3,4})$& $=$& 
$\Big\langle 
[\Delta_{11}], [\Delta_{12}], [\Delta_{21}]

\Big\rangle$   \\ \hline
 \end{longtable}

 Since $\operatorname{Ann}({\A}_{3,i})=\langle e_3\rangle$ we have  ${\bf T}_{s}({\A}_{3,i})=\emptyset$, $i=2,3,4$, and, therefore, there is no central extension without annihilator component of these algebras.

\subsubsection{$1$-dimensional central extensions of ${\A}_{3,1}$}\label{1dim A301}\label{a31}
	Let us use the following notations:
\begin{longtable}{llll}
$\nabla_1 =[\Delta_{12}]-[\Delta_{21}],$& 
$\nabla_2 = [\Delta_{13}],$& 
$ \nabla_3 = [\Delta_{31}],$& 
$\nabla_4 = [\Delta_{33}].$
\end{longtable}
Take $\theta=\sum\limits_{i=1}^4\alpha_i\nabla_i\in {\rm H^2}({\A}_{3,1})$. Note that we can not have $\alpha_2=\alpha_3=\alpha_4=0$ or $\alpha_1=0$.
	The automorphism group of ${\A}_{3,1}$ consists of invertible matrices of the form
\begin{center}$	\phi=
	\begin{pmatrix}
	x &    0  &  0\\
	z &  x^2  &  r\\
	t &  0  &  y \\
	\end{pmatrix}.$\end{center}
	Since
	$$
	\phi^T\begin{pmatrix}
	0 &  \alpha_1 & \alpha_2\\
	-\alpha_1  &  0 & 0 \\
	\alpha_3&  0    & \alpha_4\\
	\end{pmatrix} \phi=	\begin{pmatrix}
	\alpha^\ast  &  \alpha_1^* & \alpha_2^*\\
	-\alpha_1^*  & 0 & 0 \\
	\alpha_3^*&  0    & \alpha_4^*
	\end{pmatrix},
	$$
	 we have that the action of ${\rm Aut} ({\A}_{3,1})$ on the subspace
$\Big\langle \sum\limits_{i=1}^4\alpha_i\nabla_i  \Big\rangle$
is given by
$\Big\langle \sum\limits_{i=1}^4\alpha_i^{*}\nabla_i\Big\rangle,$
where
\begin{longtable}{llll}
$\alpha^*_1=   \alpha_1x^3,$ &
$\alpha^*_2=    \alpha_1x r +  \alpha_2 xy+   \alpha_4yt,$ &
$\alpha^*_3=  \alpha_3 xy-\alpha_1xr  +  \alpha_4yt  ,$ &
$\alpha_4^*=   \alpha_4 y^2.$
\end{longtable}

Since $y\neq 0$,   $\alpha_4\neq 0$  if, and only if, $\alpha_4^{\ast}\neq 0$, we need to consider only the following cases:

\begin{enumerate}
     \item Consider $\alpha_4=0$.
     
     \begin{enumerate}
         \item If $\alpha_3=-\alpha_2$, by choosing $x=y=1$ and $r=-\frac{\alpha_2}{\alpha_1}$, we get the representative $\langle \nabla_1 \rangle$,
         that we do not consider since the obtained algebra would have $2$-dimensional annihilator.
          
         \item If $\alpha_3\neq -\alpha_2$, 
     by choosing $x=\frac{1}{\sqrt[3]{\alpha_1}}$, $y=\frac{\sqrt[3]{\alpha_1}}{\alpha_2+\alpha_3}$ and $r=\frac{\alpha_3}{(\alpha_2+\alpha_3)\sqrt[3]{\alpha_1^2}}$, we obtain the representative $\langle \nabla_1+  \nabla_2\rangle$.
     \end{enumerate}
     
      \item If $\alpha_4\neq 0$, 
            by choosing 
            $x=\alpha_1\alpha_4$, 
            $y=\alpha_1^2\alpha_4$, 
            $r=\frac{\alpha_1\alpha_4(\alpha_3-\alpha_2)}{2}$ and 
            $t=-\frac{\alpha_1(\alpha_2+\alpha_3)}{2},$
            we obtain the representative $\langle \nabla_1+  \nabla_4\rangle$.

\end{enumerate}

Summarizing, we have the following distinct orbits: 
\begin{center} 
$\langle  \nabla_1+\nabla_2 \rangle$ and 
$\langle \nabla_1 + \nabla_4\rangle$.
\end{center}
These orbits give us the following algebras:

 \begin{longtable}{llllll}

${\Q}_{4,2}$ &$:$& $ e_1e_1=e_2$ & $e_1 e_2 =e_4$ & $e_1 e_3 =e_4 $&  $e_2 e_1 =-e_4$  \\

${\Q}_{4,3}$ &$:$& $ e_1e_1=e_2$ & $e_1 e_2 =e_4$ &  $e_2 e_1 =-e_4$ & $e_3 e_3 = e_4$\\

    \end{longtable}
 \subsection{The algebraic classification of $5$-dimensional non-2-step nilpotent antiassociative algebras}\label{5dim}
It is easy to see that  
there are no non-2-step nilpotent $3$-dimensional central extensions of antiassociative algebras of dimension $2$.   The split central extension of $4$-dimensional algebras obtained in Section \ref{4dim} will be denoted, respectively, by ${\Q}_{5,1}$, ${\Q}_{5,2}$ and ${\Q}_{5,3}$.  Hence, to classify $5$-dimensional non-split antiassociative algebras, we need to find only $2$-dimensional central extensions of  ${\A}_{3,1}$
and $1$-dimensional central extensions of $4$-dimensional $2$-step nilpotent algebras. 

\subsubsection{$2$-dimensional central extensions of ${\A}_{3,1}$}
  	Let us use the  notation from subsection \ref{a31}.
Hence, consider the vector space in ${\bf T}_2(\A_{3,1})$ generated by the following two cocycles: 
 \begin{longtable}{l}$\theta_1=\alpha_1 \nabla_1+ \alpha_{2}\nabla_2+\alpha_{3}\nabla_3 + \alpha_4\nabla_4;$ \\
 $\theta_2=\beta_1 \nabla_1+ \beta_{2}\nabla_2+\beta_{3}\nabla_3 + \beta_4\nabla_4$. 
    \end{longtable}
By the results obtained in the subsection \ref{1dim A301}, we can consider \begin{center}$\theta_1 \in 
\{ \nabla_1,  \nabla_1+  \nabla_2, \nabla_1+ \nabla_4\}.$
\end{center}
Therefore, we have:
\begin{enumerate}

\item $\theta_1=\nabla_1$. Then, we can suppose $\beta_1=0$.
\begin{enumerate}
    \item If $\beta_4=0$, we have the following cases:
    \begin{enumerate}
        \item If $\beta_2=0$, we have the representative $\langle \nabla_1, \nabla_3 \rangle$.
        \item If $\beta_2 \neq 0$, we have the family of  representatives  $\langle \nabla_1, \nabla_2+\lambda \nabla_3 \rangle$.
    \end{enumerate}
    \item If $\beta_4 \neq 0$, we can suppose $\beta_4=1$.
    \begin{enumerate}
     \item If $\beta_2 =\beta_3$, consider $x=y=1$ and $t=-\beta_2$, then, we obtain the representative   $\langle \nabla_1, \nabla_4 \rangle$.
        \item If $\beta_2 \neq \beta_3$, consider $x=\frac{1}{\beta_2-\beta_3}$, $y=1$ and $t=\frac{\beta_3}{\beta_3-\beta_2}$, then, we obtain the representative   $\langle \nabla_1, \nabla_2+ \nabla_4 \rangle$.
    \end{enumerate}
\end{enumerate}
    \item  \label{caso 1 a32 dim5} $\theta_1= \nabla_1+\nabla_2.$
    Note that we can consider $\beta_1=0$.
    \begin{enumerate}
        \item Suppose $\beta_4=0$.
        
        \begin{enumerate}
        \item If $\beta_2=0$ and $\beta_3 \neq 0$, by choosing  $x=y=-r=1$,  we obtain the representative $ \la\nabla_1, \nabla_3\ra$.
        
        \item If  $\beta_2 \neq 0$ and $\beta_3=0$, we have the representative $\langle \nabla_1,\nabla_2\rangle$.
        \item Suppose $\beta_2\beta_3\neq 0$.
        \begin{enumerate}
             \item If $\beta_2=-\beta_3$, we obtain the   representative  $\langle \nabla_1+\nabla_2,\nabla_2-\nabla_3\rangle$.
          \item If $\beta_2 \neq -\beta_3$, by choosing $x=y=1$ and $r= -\frac{\beta_3}{\beta_2+\beta_3}$, we obtain the family of representatives  $\langle \nabla_1,\nabla_2+\lambda\nabla_3\rangle$, with $\lambda\neq 0$. 
        \end{enumerate}


        \end{enumerate}
        \item  If $\beta_4\neq 0$, we can suppose $\beta_4=1$.
    \begin{enumerate}
 
    \item\label{1bi} If $\beta_3=\beta_2$, by choosing $x=y=1$ and $t=-\beta_2$, we obtain the representative 
    $ \la\nabla_1+\nabla_2,  \nabla_4\ra$.

        
        \item\label{1bii} If $ \beta_3\neq\beta_2$, by choosing  $x=\beta_3-\beta_2$, $y=(\beta_3-\beta_2)^2$ and $t=-\beta_2(\beta_3-\beta_2)$, we obtain the representative
           $ \la\nabla_1+\nabla_2,\nabla_3+\nabla_4\ra $.
    \end{enumerate}
    \end{enumerate}
      \item  $\theta_1 = \nabla_1+  \nabla_4$. 
      Note that we can consider $\beta_4=0$.
          \begin{enumerate}
        \item Suppose $\beta_1=0$.
        \begin{enumerate}
            \item If $\beta_2=0$, we have the representative $ \la\nabla_1+\nabla_4, \nabla_3\ra$.
            
            \item     If $\beta_2\neq 0$, we have the family of representatives
        $\la\nabla_1+\nabla_4, \nabla_2 + \lambda\nabla_3\ra$.
        \end{enumerate}

        \item If $ \beta_1\neq 0$, we can suppose $\beta_1 =1$.
        \begin{enumerate}
       
            \item Suppose $\beta_2=\beta_3$. 
            \begin{enumerate}
                \item If $\beta_2=\beta_3=0$, we have the representative $\langle \nabla_1,\nabla_4\rangle$.
                \item Otherwise, by choosing $x=1$, $y=\frac{1}{2\beta_2}$ and $r=\frac{1}{2}$, we obtain  the case \ref{1bi}.
            \end{enumerate}
            \item If $\beta_2\neq \beta_3$, we have:
            \begin{enumerate}
             \item If  $\beta_3=-\beta_2\neq  0$, by choosing $x=-1$, $r=-2\beta_2^2$, $y =2\beta_2$ and $t=\beta_2$, we obtain the representative $\la \nabla_1,\nabla_2+\nabla_4\ra$.
                \item If  $\beta_2\neq -\beta_3$, by choosing $x=1$, $y=\frac{1}{\beta_2+\beta_3}$ and $r=\frac{\beta_3}{\beta_2+\beta_3}$, we obtain, again,  the case \ref{1bii}.
                
            \end{enumerate}
        \end{enumerate}
        
    \end{enumerate}
\end{enumerate}
 Summarizing, we have the following distinct orbits: 
\begin{center} 
$\la\nabla_1, \nabla_3\ra$, $\la\nabla_1, \nabla_2+\lambda\nabla_3\ra$, $\la\nabla_1, \nabla_2+\nabla_4\ra$, $\la\nabla_1, \nabla_4\ra$ , $\la\nabla_1+\nabla_2, \nabla_2-\nabla_3\ra$,   $ \la\nabla_1+\nabla_2,  \nabla_4\ra$,      $ \la\nabla_1+\nabla_2,\nabla_3+\nabla_4\ra $, $ \la\nabla_1+\nabla_4, \nabla_3\ra$ and  $\la\nabla_1+\nabla_4, \nabla_2 + \lambda\nabla_3\ra$.
\end{center}
A standard computation shows that these  orbits are pairwise distinct. Thus, these orbits give us the following algebras:
\begin{longtable}{lllllllll}
$\Q_{5,4}$&$:$&$ e_1  e_1=e_2$&$  e_1  e_2=e_4$&$   e_2  e_1=-e_4 $&$  e_3  e_1 =  e_5$\\ 
$\Q_{5,5}^{\lambda}$&$ :$&$  e_1  e_1=e_2$&$  e_1  e_2=e_4$&$  e_1  e_3 =e_5$&$  e_2  e_1=-e_4 $&$  e_3  e_1 = \lambda e_5$\\
$\Q_{5,6}$&$ :$&$ e_1  e_1=e_2$&$  e_1e_2=e_4$&$  e_1e_3=e_5$&$  e_2e_1=-e_4$&$ e_3e_3= e_5$\\
$\Q_{5,7}$&$ :$&$ e_1  e_1=e_2$&$  e_1e_2=e_4$&$  e_2e_1=-e_4$&$ e_3e_3= e_5$\\
$\Q_{5,8}$&$ : $&$ e_1  e_1=e_2$&$ e_1e_2=e_4$&$  e_1e_3=e_4+e_5$&$  e_2e_1=-e_4$&$  e_3e_1=-e_5$&\\
$\Q_{5,9}$&$  :$&$ e_1  e_1=e_2$&$  e_1e_2=e_4$&$e_1e_3=e_4$&$  e_2e_1=-e_4$&$  e_3e_3=e_5$&\\
$\Q_{5,10}$&$ : $&$ e_1  e_1=e_2$&$ e_1e_2=e_4$&$  e_1e_3=e_4$&$  e_2e_1=-e_4$&$  e_3e_1=e_5$&$  e_3e_3=e_5$\\
$\Q_{5,11}$&$  :$&$ e_1  e_1=e_2$&$  e_1e_2=e_4$&$  e_2e_1=-e_4$&$  e_3e_1=e_5$&$e_3e_3=e_4$\\
$\Q_{5,12}^{\lambda}$&$  :$&$  e_1  e_1=e_2$&$  e_1e_2=e_4$&$  e_1e_3=e_5$&$  e_2e_1=-e_4$&$  e_3e_1=\lambda e_5$&$ e_3e_3=e_4$& \\

\end{longtable}

\subsubsection{$1$-dimensional central extensions of $4$-dimensional $2$-step nilpotent algebras}
Thanks to \cite{kppv}, we have the classification of all $4$-dimensional nilpotent algebras.
From this list we are choosing only $4$-dimensional $2$-step nilpotent algebras:

 \begin{longtable}{lllllllllll}

			$\A_{4,1}$ &$:$& $e_1 e_1 =e_2$ \\
			  
		$\A_{4,2}$ &$:$& $e_1e_1 = e_2$ & $ e_3 e_3 = e_4$ \\
		
		$\A_{4,3}$ &$:$& $e_1 e_2 = e_3$ & $e_2 e_1 =-e_3$ &\\
		
		$\A_{4,4}^{\lambda}$ &$:$& $e_1 e_1 = e_3$ & $ e_1 e_2 = e_3$ & $ e_2 e_2 = \lambda e_3$ &  \\
		
		$\A_{4,5}$ &$:$& $e_1 e_1 = e_3$ & $e_1 e_2 = e_3$ & $ e_2 e_1 = e_3 $   \\
		
	 $\A_{4,6}$ &$:$& $e_1 e_2 = e_4$ & $  e_3 e_1 = e_4$ \\
	   
	 $\A_{4,7}$ &$:$& $e_1 e_2 = e_3$ & $e_2 e_1 = e_4$ & $e_2e_2 =-e_3$ \\
	  
	 $\A_{4,8}^{\lambda}$ &$:$& $e_1 e_1 = e_3$ & $  e_1 e_2 = e_4$ & $  e_2 e_1 =-\lambda e_3$ & $ e_2 e_2 =-e_4$ &  \\
	 
	 $\A_{4,9}^{\lambda}$ &$:$& $e_1 e_1 = e_4$ & $ e_1 e_2 = \lambda e_4$ & $  e_2 e_1 =-\lambda e_4$ & $ e_2 e_2 = e_4$ & $e_3 e_3 = e_4$ &   \\

	 $\A_{4,10}$ &$:$& $e_1 e_2 = e_4$ & $ e_1 e_3 = e_4$ & $ e_2e_1 =-e_4$ & $ e_2  e_2 = e_4$ & $ e_3 e_1 = e_4  $ \\
	 
	 $\A_{4,11}$ &$:$& $e_1 e_1 = e_4$ & $e_1e_2 = e_4$ & $e_2e_1 =-e_4$ & $  e_3 e_3 = e_4 $ \\
	 $\A_{4,12}$ &$:$& $ e_1 e_2 = e_3$ & $ e_2 e_1 = e_4 $ \\
	 $\A_{4,13}$ &$:$& $ e_1 e_1 = e_4$ & $  e_1 e_2 = e_3$ & $ e_2e_1 =-e_3$ & $e_2 e_2 = 2e_3 + e_4$ \\
	$\A_{4,14}^{\lambda}$ &$:$& $e_1 e_2 = e_4$ & $ e_2 e_1 = \lambda e_4$ & $e_2 e_2 = e_3$  \\
 $\A_{4,15}$ &$:$& $ e_1 e_2 = e_4$ & $  e_2 e_1 =-e_4$ & $ e_3 e_3 = e_4$ \\

    \end{longtable}

In the following table we give the description of the second cohomology space of  $4$-dimensional $2$-step nilpotent algebras.

\begin{longtable}{|lcl|}
                \hline
 
${\rm H}^2({\A}_{4,1})$& $=$& 
$\Big\langle 
 [\Delta_{12}]-[\Delta_{21}],  [\Delta_{13}], [\Delta_{14}], [\Delta_{31}], [\Delta_{33}],[\Delta_{34}], [\Delta_{41}], [\Delta_{43}], [\Delta_{44}]
\Big\rangle$    \\ \hline

${\rm H}^2({\A}_{4,2})$& $=$& 
$\Big\langle 
[\Delta_{12}]-[\Delta_{21}],  [\Delta_{13}],  [\Delta_{31}],[\Delta_{34}]-[\Delta_{43}]
\Big\rangle$     \\ \hline

${\rm H}^2({\A}_{4,3})$& $=$& 
$ \Big\langle [\Delta_{11}],  [\Delta_{12}], [\Delta_{14}], [\Delta_{22}], [\Delta_{24}], [\Delta_{41}], [\Delta_{42}],  [\Delta_{44}] \Big\rangle$

    \\ \hline

${\rm H}^2({\A}^{\lambda}_{4,4})$& $=$& 
$\Big\langle 
 [\Delta_{12}], [\Delta_{14}], [\Delta_{21}], [\Delta_{22}], [\Delta_{24}], [\Delta_{41}], [\Delta_{42}], [\Delta_{44}]
\Big\rangle$   \\ \hline

${\rm H}^2({\A}_{4,5})$& $=$& 
$ \Big\langle [ \Delta_{11}], [ \Delta_{12}], [\Delta_{14}], [\Delta_{22}],[\Delta_{24}], [\Delta_{41}], [\Delta_{42}], [\Delta_{44}] \Big\rangle$

    \\ \hline
    
    ${\rm H}^2({\A}_{4,6})$& $=$& 
$ \Big\langle  [\Delta_{11}],  [\Delta_{12}], [\Delta_{13}], [\Delta_{21}], [\Delta_{22}],[\Delta_{23}],[ \Delta_{32}], [\Delta_{33}] \Big\rangle$

    \\ \hline
    
       ${\rm H}^2({\A}_{4,7})$& $=$& 
$ \Big\langle [\Delta_{11}], [\Delta_{12}], [\Delta_{14}]-[\Delta_{24}]-[\Delta_{31}] \Big\rangle$

    \\ \hline
${\rm H}^2({\A}^{\lambda}_{4,8})$& $=$& 
$\Big\langle 
[\Delta_{21}], [\Delta_{12}]
\Big\rangle$   \\ \hline
${\rm H}^2({\A}^{\lambda}_{4,9})$& $=$& 
$\Big\langle 
 [\Delta_{11}], [ \Delta_{12}], [\Delta_{13}], [\Delta_{21}], [\Delta_{22}],[\Delta_{23}], [\Delta_{31}],[\Delta_{32}]
\Big\rangle$   \\ \hline
      ${\rm H}^2({\A}_{4,10})$& $=$& 
$ \Big\langle  [\Delta_{11}], [ \Delta_{12}], [\Delta_{13}], [\Delta_{21}], [\Delta_{22}],[\Delta_{23}], [\Delta_{32}],[\Delta_{33}] \Big\rangle$ 
    \\ \hline
       ${\rm H}^2({\A}_{4,11})$& $=$& 
$ \Big\langle  [\Delta_{11}], [ \Delta_{12}], [\Delta_{13}], [\Delta_{21}], [\Delta_{22}],[\Delta_{23}], [\Delta_{31}],[\Delta_{32}]\Big\rangle$ 
    \\ \hline
       ${\rm H}^2({\A}_{4,12})$& $=$& 
$ \Big\langle [\Delta_{11}], [\Delta_{14}]-[\Delta_{31}],  [\Delta_{22}], [\Delta_{23}]-[\Delta_{42}]\Big\rangle$ 
    \\ \hline
       ${\rm H}^2({\A}_{4,13})$& $=$& 
$ \Big\langle[\Delta_{11}] , [\Delta_{12}]\Big\rangle$ 
    \\ \hline
    ${\rm H}^2({\A}^{\lambda}_{4,14})$& $=$& 
$\Big\langle 
 [\Delta_{11}],[ \Delta_{21}],[ \Delta_{13}]-\lambda^2[\Delta_{31}]+\lambda [\Delta_{24}]-[\Delta_{42}], [\Delta_{23}]-[\Delta_{32}]
\Big\rangle$   \\ \hline
  ${\rm H}^2({\A}_{4,15})$& $=$& 
$ \Big\langle   [\Delta_{11}], [ \Delta_{12}], [\Delta_{13}], [\Delta_{21}], [\Delta_{22}],[\Delta_{23}], [\Delta_{31}], [\Delta_{32}]\Big\rangle$ 
    \\ \hline
 \end{longtable}
 
 Note that we can just consider central extensions of  ${\A}_{4,1}$, ${\A}_{4,2}$, ${\A}_{4,7}$, ${\A}_{4,12}$ and ${\A}_{4,14}^{\lambda}$, since ${\bf T}_{s}({\A}_{4,i})=\emptyset$, in the other cases.

\subsubsection{Central extensions of ${\A}_{4,1}$}
Let us use the following notations:
\begin{longtable}{lllll}
$\nabla_1 =[\Delta_{12}]-[\Delta_{21}],$& 
$\nabla_2 =  [\Delta_{13}],$& 
$ \nabla_3=[\Delta_{14}],$& 
$ \nabla_4=[\Delta_{31}],$& 
$\nabla_5=[\Delta_{33}],$\\
$\nabla_6=[\Delta_{34}],$& 
$\nabla_7=[\Delta_{41}],$& 
$\nabla_8=[\Delta_{43}],$& 
$\nabla_9=[\Delta_{44}].$
\end{longtable}
Take $\theta=\sum\limits_{i=1}^9\alpha_i\nabla_i\in {\rm H^2}({\A}_{4,1})$.	The automorphism group of ${\A}_{4,1}$ consists of invertible matrices of the form
\begin{center}
    $\phi= \begin{pmatrix} x&0 &0 & 0\\r&x^{2}& s &t\\u&0 &y& v \\
   p& 0& q&z\end{pmatrix}.$
\end{center}
		Since
	$$
	\phi^T\begin{pmatrix}
	0 &  \alpha_1 & \alpha_2&\alpha_3\\
	-\alpha_1  &  0 & 0 &0\\
	\alpha_4&  0  &\alpha_5  & \alpha_6\\
	\alpha_7&0&\alpha_8&\alpha_9
	\end{pmatrix} \phi=\begin{pmatrix}
	\alpha^* &  \alpha_1^* & \alpha_2^*&\alpha_3^*\\
	-\alpha_1^* &  0 & 0 &0\\
	\alpha_4^*&  0  &\alpha_5^*  & \alpha_6^*\\
	\alpha_7^*&0&\alpha_8^*&\alpha_9^*
	\end{pmatrix},
	$$
	 we have that the action of ${\rm Aut} ({\A}_{4,1})$ on the subspace
$\Big\langle \sum\limits_{i=1}^9\alpha_i\nabla_i  \Big\rangle$
is given by
$\Big\langle \sum\limits_{i=1}^9\alpha_i^{*}\nabla_i\Big\rangle,$
where
\begin{longtable}{lcl}
$    \alpha_1^{*}$&$=$&$ \alpha_1x^3, $\\ 
$
\alpha_2^{*}$&$=$&$
\alpha_1 x s + \alpha_2 xy + \alpha_3xq + \alpha_5u y  +\alpha_6 uq  + 
 \alpha_8 y p  +\alpha_9 p q, $\\
 $
\alpha_3^{*}$&$=$&$\alpha_1x t+\alpha_2 x v +\alpha_3 x z + \alpha_5uv  + \alpha_6 u z + 
 \alpha_8 vp  + \alpha_9pz, $\\
 $ 
\alpha_4^{*}$&$=$&$ -\alpha_1x s + \alpha_4x y  + \alpha_5u y  +\alpha_6 y p  + \alpha_7x q  + 
\alpha_8 uq  + \alpha_9pq,$\\
$\alpha_5^{*}$&$=$&$\alpha_5 y^2  + (\alpha_6 + \alpha_8)y q  + \alpha_9q^2,  $\\ 
$
\alpha_6^{*}$&$=$&$\alpha_5yv  + \alpha_6y z  + \alpha_8vq  +\alpha_9qz, $\\
$
\alpha_7^{*}$&$=$&$ -\alpha_1 x t + \alpha_4x v  + \alpha_5uv  + \alpha_6pv  + \alpha_7x z + 
   \alpha_8uz  + \alpha_9pz, $\\ 
   $
\alpha_8^{*}$&$=$&$\alpha_5y v + \alpha_6qv + \alpha_8yz + \alpha_9qz,  $\\
$\alpha_9^{*}$&$=$&$ \alpha_5v^2  +(\alpha_6 + \alpha_8)vz + \alpha_9z^2. $
\end{longtable}
Since $\operatorname{Ann}(\A_{4,1})= \la e_2,e_3, e_4\ra $, we can not have $\alpha_1 = 0$. So, we can consider  $\alpha_1=1$. Similarly,  we can not have $\alpha_2=\alpha_4=\alpha_5=\alpha_6=\alpha_8=0$ and, also, $\alpha_3=\alpha_6=\alpha_7=\alpha_8=\alpha_9=0$.
\begin{remark}
By choosing $\phi$ given by $x=y=z=1$,  $s=\alpha_4$ and  $t=\alpha_7$, we obtain  \begin{center}
    $\phi \theta = \nabla_1 + (\alpha_2+\alpha_4)\nabla_2 + (\alpha_3+\alpha_7)\nabla_3+\alpha_5 \nabla_5+ \alpha_6\nabla_6+  \alpha_8\nabla_8+\alpha_9\nabla_9$. 
\end{center} So, we can consider $\alpha_4=\alpha_7=0$.  Moreover, considering $\phi$ given by $x=v=q=1,$ we see that  $\phi \theta $ permutes the coefficients $\alpha_2$ with $ \alpha_3$, $\alpha_5 $ with $\alpha_9$ and also $\alpha_6$ with $\alpha_8$. In other words, we have $(\alpha_2^{*}, \alpha_3^{*},\alpha_5^{*}, \alpha_{6}^{*},\alpha_8^{*}, \alpha_9^{*})= (\alpha_3, \alpha_2,\alpha_9, \alpha_8, \alpha_6, \alpha_5)$. 
\end{remark}
Therefore, we can consider only the following cases:
\begin{enumerate}
        \item $\theta=  \nabla_1 + \alpha_2 \nabla_2 + \alpha_3\nabla_3$. Note that $\alpha_2\alpha_3 \neq 0.$ By choosing  $x=y=z=1$, $v=\frac{1-\alpha_3}{\alpha_2}$ and $q=-\frac{\alpha_2}{\alpha_3}$, we obtain   $\la \nabla_1+\nabla_3 \ra$, that we do not consider since the obtained algebra would have $2$-dimensional annihilator.

            \item \label{caso 68 a42 =0} $\theta=  \nabla_1 + \alpha_2 \nabla_2 + \alpha_3\nabla_3 +\alpha_6 \nabla_6+\alpha_8\nabla_8$, with $(\alpha_6,\alpha_8)\neq (0,0)$.
            
            \begin{enumerate}
                \item If     $\alpha_8=-\alpha_6$  and  $\alpha_6\neq 0$, we have:
                    \begin{enumerate}
    
        \item  If  $\alpha_2 \neq 0$, then, by choosing   $x=1$, $y=\frac{1}{\alpha_{2}}$, $v=-\frac{\alpha_3}{\alpha_6}$ and $z=\frac{\alpha_2}{\alpha_6}$, we obtain the representative $ \la\nabla_1+\nabla_2 +  \nabla_6-\nabla_8\ra$.

\item If $\alpha_2=\alpha_3=0$,  by  choosing  $x=z=1$ and $y=\frac{1}{\alpha_6}$, we obtain the representative $ \la\nabla_1+ \nabla_6-\nabla_8\ra$.
    \end{enumerate}
                \item\label{caso 68 a42} If  $\alpha_8 \neq -\alpha_6$, note that we can suppose $\alpha_8\neq 0$. By choosing $x=v=1$, $s=u=-\frac{\alpha_3 }{\alpha_6 + \alpha_8}$, $t=-\frac{\alpha_2\alpha_6 }{\alpha_6 + \alpha_8}$, $p=-\frac{\alpha_2 }{\alpha_6 + \alpha_8}$ and $q=\frac{1}{\alpha_8}$,   we have the family of representatives  $\la \nabla_1+\nabla_6+\lambda\nabla_8\ra$.
            \end{enumerate}
      \item\label{123568} $\theta_1=  \nabla_1 + \alpha_2 \nabla_2 + \alpha_3\nabla_3 +\alpha_5 \nabla_5+\alpha_6\nabla_6+\alpha_8\nabla_8$, with $\alpha_5\neq 0$.            
 \begin{enumerate}
  \item Suppose that $\alpha_6=\alpha_8=0$.   
       Note that $\alpha_3 \neq 0$. So, by choosing $x=1$,  $y=\frac{1}{\sqrt{\alpha_5}}$ , $ q=-\frac{\alpha_2}{\alpha_3 \sqrt{\alpha_5}}$ and  $z=\frac{1}{\alpha_{3}}$, we obtain the representative $ \la \nabla_1+\nabla_3+ \nabla_5\ra$.
         \item  \label{caso a42 1 a6} Suppose $\alpha_6 \neq 0$ and $\alpha_8=0$.
    By choosing  $x=y=1$,  $z=\frac{1}{\alpha_6}$,  $q=-\frac{\alpha_5}{\alpha_6}$, $u=-\frac{\alpha_3}{\alpha_6}$, $p=\frac{2\alpha_3\alpha_5-\alpha_2\alpha_6}{\alpha_6^2}$  and $s=\frac{\alpha_3\alpha_5-\alpha_2\alpha_6}{\alpha_6}$, we obtain the representative $\la \nabla_1+\nabla_6\ra$.

 \item If $\alpha_6=0$ and $\alpha_8\neq 0$, by choosing $x=q=1$, $ v=-\alpha_8$ and $z=\alpha_5$, we obtain again the case \ref{caso 68 a42}.
              
              \item \label{caso 568 a42 =0}   Suppose $\alpha_6\alpha_8\neq 0$.    
              \begin{enumerate}
                  \item If  $\alpha_8  \neq -\alpha_6$,
             by choosing  $x=q=z=1$ and  $v=-\frac{\alpha_6 + \alpha_8}{\alpha_5}$, we obtain again the case \ref{caso 68 a42}. 
             \item If $\alpha_8=-\alpha_6 $, we have two cases: \begin{enumerate}
                  \item If $\alpha_3=0$, 
              by choosing $x=1$, $y=\frac{1}{\sqrt{\alpha_5}}$, $z=\frac{\sqrt{\alpha_5}}{\alpha_6}$, $u=-\frac{\alpha_2}{2\alpha_5}$ and $s=-t=-\frac{\alpha_2}{2\sqrt{\alpha_5}}$, we obtain   the representative  $\la \nabla_1+\nabla_5+\nabla_6-\nabla_8\ra$.
              \item Otherwise, if $\alpha_3\neq 0$, by choosing $x=\frac{\alpha_3^2 \alpha_5}{\alpha_6^2}$, $y=\frac{\alpha_3^3 \alpha_5}{\alpha_6^3}$, $z=\frac{\alpha_3^3 \alpha_5^2}{\alpha_6^4}$, $u=-\frac{\alpha_2\alpha_3^2 }{2\alpha_6^2}$ and $s=-t=-\frac{\alpha_2\alpha_3^3 \alpha_5}{2\alpha_6^3}$, we obtain the representative $\la \nabla_1+\nabla_3+\nabla_5+\nabla_6-\nabla_8\ra$.
              \end{enumerate} 
              \end{enumerate}


  
          \end{enumerate}
           \item $\theta=  \nabla_1 + \alpha_2 \nabla_2 + \alpha_3\nabla_3 +\alpha_5 \nabla_5+\alpha_6\nabla_6+ \alpha_8\nabla_8 + \alpha_9\nabla_9$, with $\alpha_9\neq 0$.
              By choosing $x=2\alpha_9$, $y=v=1$, $q=-\frac{1 + \alpha_6 + \alpha_8 - 
  \sqrt{(\alpha_6 + \alpha_8)^2 - 
   4 \alpha_5 \alpha_9}}{2 \alpha_9}$ and $z=-\frac{ \alpha_6 + \alpha_8 - 
  \sqrt{(\alpha_6 + \alpha_8)^2 - 
   4 \alpha_5 \alpha_9}}{2 \alpha_9}$, we obtain the case   \ref{caso 68 a42 =0} or the case \ref{123568}, depending on the value of $\alpha_5^{\ast}$.
                      
                
\end{enumerate}
A standard computation shows that these  orbits are pairwise distinct.
Summarizing, we get the distinct orbits 
\begin{center}
$ \la \nabla_1+\nabla_3+ \nabla_5\ra$, $\la \nabla_1+\nabla_6+\lambda \nabla_8\ra$,  $ \la\nabla_1+\nabla_2 +  \nabla_6-\nabla_8\ra $,\\ $\la \nabla_1+ \nabla_3+\nabla_5+\nabla_6-\nabla_8\ra$ and $\la \nabla_1+\nabla_5+\nabla_6-\nabla_8\ra$,\end{center} 
which give us the following algebras:
\begin{longtable}{llllllllll}
$\Q_{5,13}$& $:$ & $e_1e_1=e_2$ & $ e_1e_2=e_5$& $e_1e_4=e_5$ & $e_2e_1=-e_5$ & $ e_3e_3= e_5$& &\\
$\Q_{5,14}^{\lambda}$& $:$ &  $e_1e_1=e_2$ &   $ e_1e_2=e_5$& $e_2e_1=-e_5$ & $ e_3e_4=e_5$ & $ e_4e_3=\lambda e_5 $\\
$\Q_{5,15}$ & $:$ &  $ e_1e_1=e_2$  & $ e_1e_2=e_5$&$  e_1e_3=e_5$& $e_2e_1=-e_5$  & $ e_3e_4=e_5$&$e_4e_3=-e_5$  \\
$\Q_{5,16}$& $:$ &  $ e_1e_1=e_2$  & $ e_1e_2=e_5$& $e_1e_4=e_5$& $e_2e_1=-e_5$  &$e_3e_3=e_5 $ &$ e_3e_4=e_5$ & $e_4e_3=-e_5$ \\
$\Q_{5,17}$& $:$ &  $ e_1e_1=e_2$  & $ e_1e_2=e_5$& $e_2e_1=-e_5$  &$e_3e_3=e_5 $ &$ e_3e_4=e_5$ & $e_4e_3=-e_5$ \\
\end{longtable}

\subsubsection{Central extensions of ${\A}_{4,2}$}

Let us use the following notations:
\begin{longtable}{llll}
$\nabla_1 = [\Delta_{12}]-[\Delta_{21}],$& 
$\nabla_2 = [\Delta_{13}],$& 
$\nabla_3 = [\Delta_{31}],$ & 
$\nabla_4 = [\Delta_{34}]-[\Delta_{43}].$
\end{longtable}
Take $\theta=\sum\limits_{i=1}^4\alpha_i\nabla_i\in {\rm H^2}({\A}_{4,2})$. Since $\operatorname{Ann}(\A_{4,2})=\langle e_2, e_4\rangle$, we can not have $\alpha_1\alpha_4=0.$ 
	The automorphism group of ${\A}_{4,2}$ consists of invertible matrices of the forms
\begin{center}$	\phi_1=
	\begin{pmatrix}
	x &    0  &  0 &0\\
	z &  x^2  &  r&0\\
	0 &  0  &  y & 0\\
	t&0 & w &y^2\\
	\end{pmatrix} \, \, \, \text{and}\, \, \, \phi_2=\begin{pmatrix} 0&0 &a & 0\\e&0& f & a^{2}\\b&0 &0& 0 \\
   c &b^{2}& d&0\end{pmatrix}.$\end{center}
		Since
	$$
	\phi_i^T\begin{pmatrix}
	0 &  \alpha_1 & \alpha_2&0\\
	-\alpha_1  &  0 & 0 &0\\
	\alpha_3&  0  &0  & \alpha_4\\
	0&0&-\alpha_4&0
	\end{pmatrix} \phi_i=	\begin{pmatrix}
	0 &  \alpha^*_1& \alpha_2^{*}&0\\
	-\alpha^*_1  &  0 & 0 &0\\
	\alpha_3^{*}&  0  &0  & \alpha_4^{*}\\
	0&0&-\alpha_4^{*}&0
	\end{pmatrix} ,
	$$
	 we have that the action of ${\rm Aut} ({\A}_{4,2})$ on the subspace
$\Big\langle \sum\limits_{i=1}^4\alpha_i\nabla_i  \Big\rangle$
is given by
$\Big\langle \sum\limits_{i=1}^4\alpha_i^{*}\nabla_i\Big\rangle,$
where: 
\begin{longtable}{lllll}
if $i=1:$ & $\alpha^*_1=   \alpha_1x^3,$ &
$\alpha^*_2= \alpha_1 x r + \alpha_2xy  - \alpha_4ty  ,$ &
$\alpha^*_3=  -\alpha_1xr + \alpha_3 xy + \alpha_4t y ,$ &
$\alpha_4^*=  \alpha_4y^3;$\\
if $i=2:$ & $\alpha_1^{*}= \alpha_4b^3,$&$
\alpha_2^{*}= 
-\alpha_1 a e + \alpha_3ab + \alpha_4bd,$&
$\alpha_3^{*}=\alpha_1 ae + \alpha_2ab- \alpha_4 bd,$&$
\alpha_4^{*}=\alpha_1a^3$.
\end{longtable}
So, we need to consider only the following cases:
\begin{enumerate}
    \item If $\alpha_2=-\alpha_3$, by choosing $x=\frac{1}{\sqrt[3]{\alpha_1}}$, $r=\frac{\alpha_3}{\alpha_1\sqrt[3]{\alpha_4}}$ and $y=\frac{1}{\sqrt[3]{\alpha_4}}$, we obtain  the representative $\langle \nabla_1+\nabla_4\rangle $.
    
    \item If $\alpha_2\neq -\alpha_3$, by choosing $x=\frac{\alpha_2 + \alpha_3}{\sqrt[3]{\alpha_1^2\alpha_4}}$, $r=\frac{\alpha_3(\alpha_2 + \alpha_3)}{\alpha_1\sqrt[3]{\alpha_1\alpha_4^2}}$ and $y=\frac{\alpha_2 + \alpha_3}{\sqrt[3]{\alpha_1\alpha_4^2}}$, we obtain the representative  $\langle \nabla_1+\nabla_2+\nabla_4\rangle$. 
\end{enumerate}

Hence, we get the distinct orbits  $\langle \nabla_1+\nabla_4\rangle $ and $\langle \nabla_1+\nabla_2+\nabla_4\rangle$, which give us  the following   algebras 
\begin{longtable}{lllllllll}
$\Q_{5,18}$&$:$&$ e_1e_1=e_2$&$e_1e_2=e_5$&$e_2e_1=-e_5$&$ e_3 e_3=e_4$&$ e_3e_4=e_5$&$e_4e_3=-e_5$\\
$\Q_{5,19}$&$:$&$ e_1e_1=e_2$&$e_1e_2=e_5$&$ e_1e_3=  e_5$&$e_2e_1=-e_5$&$ e_3 e_3=e_4$&$ e_3e_4=e_5$&$e_4e_3=-e_5$

\end{longtable}

\subsubsection{Central extensions of ${\A}_{4,7}$}Let us use the following notations:
\begin{longtable}{lll}
$\nabla_1 =[\Delta_{11}],$& 
$\nabla_2 = [\Delta_{12}],$& 
$ \nabla_3 = [\Delta_{14}]-[\Delta_{24}]-[\Delta_{31}].$ 
\end{longtable}
Take $\theta=\sum\limits_{i=1}^3\alpha_i\nabla_i\in {\rm H^2}({\A}_{4,7})$. The automorphism group of ${\A}_{4,7}$ consists of invertible matrices of the form
\begin{center}
 $\begin{pmatrix} x& 0&0 & 0\\0&x& 0 & 0\\y&z & x^{2}& 0 \\
   w & t& 0&x^{2}\end{pmatrix}.$   
\end{center}
		Since
	$$
	\phi^T\begin{pmatrix}
	\alpha_1 &  \alpha_2 & 0&\alpha_3\\
	0  &  0 & 0 &-\alpha_3\\
	-\alpha_3&  0  &0  & 0\\
	0&0&0&0
	\end{pmatrix} \phi=	\begin{pmatrix}
	\alpha_1^{*} &  \alpha_2^{*}-\alpha^{**} & 0&\alpha_3^*\\
	\alpha^*  &  \alpha^{**} & 0 &-\alpha_3^*\\
	-\alpha_3^*&  0  &0  & 0\\
	0&0&0&0
	\end{pmatrix} ,
	$$
	 we have that the action of ${\rm Aut} ({\A}_{4,7})$ on the subspace
$\Big\langle \sum\limits_{i=1}^3\alpha_i\nabla_i  \Big\rangle$
is given by
$\Big\langle \sum\limits_{i=1}^3\alpha_i^{*}\nabla_i\Big\rangle,$
where
\begin{longtable}{llll}
$\alpha^*_1=  \alpha_1 x^2 - \alpha_3yx + \alpha_3wx $, &
$\alpha^*_2= \alpha_2 x^2$, &
$\alpha^*_3=  \alpha_3x^3$.
\end{longtable}

Note that, since $\operatorname{Ann}(\mathcal{A}_{4,7})=\langle e_3,e_4\rangle$,  we can not have $\alpha_3=0$. Suppose $\alpha_3=1$. Then, we there are two cases to consider:
\begin{enumerate}
    \item If $\alpha_2=0$, by choosing  $x=1$ and  $y=\alpha_1$, we get the representative $\langle  \nabla_3\rangle$.
    \item If $\alpha_2\neq 0$, by choosing $x=\alpha_2$ and $y=\alpha_1\alpha_2$, we get the representative $\langle \nabla_2+  \nabla_3\rangle$.
\end{enumerate}

Therefore, we have the distinct orbits $\langle  \nabla_3\rangle$ and $\langle \nabla_2+ \nabla_3\rangle$, which give us the following algebras:  
\begin{longtable}{lllllllllll}
$\Q_{5,20}$ & $:$&$ e_1 e_2 = e_3$&$  e_1e_4= e_5 $&$ e_2 e_1 = e_4$&$  e_2 e_2 =-e_3$&$  e_2 e_4=-e_5$&$  e_3 e_1= -e_5$\\
$\Q_{5,21}$ & $:$&$ e_1 e_2 = e_3+e_5$&$  e_1e_4= e_5 $&$ e_2 e_1 = e_4$&$  e_2 e_2 =-e_3$&$  e_2 e_4=-e_5$&$  e_3 e_1= -e_5$
\end{longtable}
\subsubsection{Central extensions of ${\A}_{4,12}$}Let us use the following notations:
\begin{longtable}{llll}
$\nabla_1 =[\Delta_{11}],$& 
$\nabla_2 = [\Delta_{14}]-[\Delta_{31}],$& 
$ \nabla_3 = [\Delta_{22}],$&
$\nabla_4=[\Delta_{23}]-[\Delta_{42}].$ 
\end{longtable}
Take $\theta=\sum\limits_{i=1}^4\alpha_i\nabla_i\in {\rm H^2}({\A}_{4,12})$. 	The automorphism group of ${\A}_{4,12}$ consists of invertible matrices of the forms
\begin{center}
$\phi_1=\begin{pmatrix} x& 0&0 & 0\\0 &y& 0 & 0\\ r&s & xy& 0\\
  t & z& 0&xy\end{pmatrix}\, \, \, \text{and}\, \, \phi_2= \begin{pmatrix} 0& a&0 & 0\\b&0& 0 & 0\\ c&d & 0& ab\\
 e & f&ab&0\end{pmatrix}. $
\end{center}
		Since
	$$
	\phi_i^T\begin{pmatrix}
	\alpha_1 &  0 & 0&\alpha_2\\
	0  &  \alpha_3 & \alpha_4 &0\\
	-\alpha_2&  0  &0  & 0\\
	0&-\alpha_4&0&0
	\end{pmatrix} \phi_i=	\begin{pmatrix}
	\alpha_1^{*} &  \alpha^* & 0&\alpha_2^{*}\\
	\alpha^{**}&  \alpha_3^{*} & \alpha_4^{*} &0\\
	-\alpha_2^{*}&  0  &0  & 0\\
	0&-\alpha_4^{*}&0&0
	\end{pmatrix}  ,
	$$
	 we have that the action of ${\rm Aut} ({\A}_{4,12})$ on the subspace
$\Big\langle \sum\limits_{i=1}^4\alpha_i\nabla_i  \Big\rangle$
is given by
$\Big\langle \sum\limits_{i=1}^4\alpha_i^{*}\nabla_i\Big\rangle,$
where
\begin{longtable}{lllll}
if $i=1:$ & $\alpha_1^{*}=(\alpha_1x +\alpha_2(t-r))x ,$& $\alpha_2^{*}=\alpha_2x^2y,$& $\alpha_3^{*}= (\alpha_3y + \alpha_4 (s-z))y ,$ & $\alpha_{4}^{*}=\alpha_4xy^2;$\\
if $i=2:$ & $\alpha_1^{*}=(\alpha_3b + \alpha_4 (c - e))b ,$&$
\alpha_2^{*}=\alpha_4ab^2,$&
$\alpha_3^{*}= (\alpha_1 a + \alpha_2 (f-d))a ,$&$ \alpha_{4}^{*}=\alpha_2 a^2 b.$
\end{longtable}
Note that, since $\operatorname{Ann}(\A_{4,12})=\langle e_3, e_4\rangle$, we can not have $\alpha_2=\alpha_4=0$. Besides, considering $\phi_2$ given by $a=b=1$, we see that it change the position of $\alpha_2$ and $\alpha_4$. Thus, we need to consider only the case when $\alpha_2\neq 0$. Therefore, assume that $\alpha_2=1$. 
    \begin{enumerate}
        \item Suppose $\alpha_4=0$.
        
        \begin{enumerate}
        
        \item If $\alpha_3=0$, by choosing $a=b=1$ and $d=\alpha_1$, we have the representative $\langle \nabla_4\rangle$. 
            \item  If $\alpha_3\neq 0$, by choosing $x=y=\alpha_3$ and  $r=\alpha_1\alpha_3$, we obtain the representative $\langle \nabla_2 +\nabla_3\rangle$.
        \end{enumerate}

        \item If $\alpha_4\neq 0$, 
                by choosing $x=\sqrt[3]{\alpha_4}$, $y=\frac{1}{\sqrt[3]{\alpha_4^2}}$, $r=\alpha_1\sqrt[3]{\alpha_4}$ and $s=-\frac{\alpha_3}{\alpha_4\sqrt[3]{\alpha_4^2}}$, we obtain the representative  $ \langle\nabla_2+\nabla_4\rangle$.
    \end{enumerate}

Thus, we have three representatives of distinct orbits
\begin{center}$\langle \nabla_2 +\nabla_3\rangle$,  $ \langle\nabla_2+\nabla_4\rangle$ and  $\langle \nabla_4\rangle$,
\end{center}
which give us the following algebras:
 \begin{longtable}{llllllll}
  $\Q_{5,22}$&$ :$&$   e_1 e_2 = e_3$&$  e_1e_4=e_5$&$  e_2 e_1 = e_4$&$  e_2e_2=e_5$&$ e_3e_1=-e_5$\\
  
 $\Q_{5,23}$&$ :$&$   e_1 e_2 = e_3$&$   e_1e_4=e_5$&$ e_2 e_1 = e_4$&$ e_2e_3=e_5$&$ e_3e_1=-e_5$&$e_4e_2=-e_5$\\

 $\Q_{5,24}$&$: $&$ e_1 e_2 = e_3$&$ e_2 e_1 = e_4$&$ e_2e_3=e_5$&$e_4e_2=-e_5$\\
 
 \end{longtable}
\subsubsection{Central extensions of ${\A}_{4,14}^{\lambda}$}
 Let us use the following notations:
\begin{longtable}{llll}
$\nabla_1 =[\Delta_{11}],$& 
$\nabla_2 = [\Delta_{21}],$& 
$ \nabla_3 = [ \Delta_{13}]-\lambda^2[\Delta_{31}]+\lambda [\Delta_{24}]-[\Delta_{42}],$&$\nabla_4=[\Delta_{23}]-[\Delta_{32}]$. 
\end{longtable}
Take $\theta=\sum\limits_{i=1}^4\alpha_i\nabla_i\in {\rm H^2}({\A}_{4,14}^{\lambda})$. 	The automorphism group of ${\A}_{4,14}^{\lambda}$ consists of invertible matrices of the form
\begin{center}
    $ \phi=\begin{pmatrix} x& a&0 & 0\\0 &y& 0 & 0\\ b&c & y^2& 0\\
 d & e& (1+ \lambda) ay&xy\end{pmatrix}$
\end{center}
		Since
	$$
	\phi^T\begin{pmatrix}
	\alpha_1 &  0 & \alpha_3&0\\
\alpha_2  &  0 & \alpha_4 &\lambda\alpha_3\\
	-\lambda^2\alpha_3&  -\alpha_4 &0  & 0\\
	0&-\alpha_3&0&0
	\end{pmatrix} \phi=\begin{pmatrix}
	\alpha_1^* &  \alpha^* & \alpha_3^*&0\\
\alpha_2^*+\lambda \alpha^*  &  \alpha^{**} & \alpha_4^* &\lambda\alpha_3^*\\
	-\lambda^2\alpha_3^*&  -\alpha_4^* &0  & 0\\
	0&-\alpha_3^*&0&0
	\end{pmatrix},
	$$
	 we have that the action of ${\rm Aut} ({\A}_{4,14}^{\lambda})$ on the subspace
$\Big\langle \sum\limits_{i=1}^4\alpha_i\nabla_i  \Big\rangle$
is given by
$\Big\langle \sum\limits_{i=1}^4\alpha_i^{*}\nabla_i\Big\rangle,$
where
\begin{longtable}{lll}
$   \alpha_1^{*}$&$= $&$ \alpha_1 x^2 + \alpha_3bx(1 - \lambda^2), $\\ 
 $
 \alpha_2^{*}$&$=$&$ \alpha_2x y + \alpha_3ab(1+\lambda^3) + \alpha_4by(1+\lambda)  + \alpha_1ax(1 - \lambda) + 2 \alpha_3dy \lambda  - 
\alpha_3cx\lambda (1 + \lambda),$\\ 
$ \alpha_3^{*}$&$=$&$\alpha_3xy^{2},$\\
$\alpha_4^{*}$&$= $&$ \alpha_4y^3  +\alpha_3 ay^2(1 + \lambda + \lambda^2).$
 \end{longtable}
Since $\operatorname{Ann}(\A_{4,14}^{\lambda})=\la e_3, e_4\ra$, we can not have $\alpha_3=0$. Then,  assume $\alpha_3=1$.
\begin{enumerate}
    \item  $\lambda=0$.
    \begin{enumerate}
        \item Suppose $\alpha_2\neq \alpha_1\alpha_4$.  Then, by choosing $x=y=\alpha_2-\alpha_1\alpha_4$, $a= -\alpha_4(\alpha_2-\alpha_1\alpha_4)$ and $b =-\alpha_1(\alpha_2-\alpha_1\alpha_4) $, we obtain the representative   $\la \nabla_2+\nabla_3\ra$.
        
        \item Suppose $\alpha_2=\alpha_1\alpha_4$. Then, by choosing $x=y=1$, $a=-\alpha_4$ and  $b=-\alpha_1$, we obtain the representative $\la  \nabla_3\ra$.
    \end{enumerate}

    \item $\lambda=1$.
    \begin{enumerate}
    \item \label{caso 1 a415 lambda =1} If $\alpha_1=0$, 
       by choosing  $x=1$, $y=1$, $a=-\frac{\alpha_4}{3}$ and $d=-\frac{\alpha_2}{2}$, we obtain the representative $\la \nabla_3\ra$.

        \item If $\alpha_1 \neq 0$, by choosing $x=\frac{1}{\sqrt{\alpha_1}}$, $y=\sqrt[4]{\alpha_1}$, $a=-\frac{\alpha_4\sqrt[4]{\alpha_1} }{3}$  and  $c=\frac{\alpha_2\sqrt[4]{\alpha_1} }{2}$, we obtain the representative  $ \la \nabla_1+\nabla_3\ra$.

\end{enumerate}

    \item $\lambda=-1$.

  \begin{enumerate}
      \item  If $\alpha_1=0$, by choosing $x=y=1$, $a=-\alpha_4$ and $d=\frac{\alpha_2}{2}$, we obtain the representative $ \la \nabla_3\ra $.
      \item If $\alpha_1 \neq 0$, by choosing  $x=\frac{1}{\sqrt{\alpha_1}}$, $y=\sqrt[4]{\alpha_1}$, $a=-\alpha_4\sqrt[4]{\alpha_1}$ and $d= \frac{\alpha_2-2\alpha_1\alpha_4}{2\sqrt{\alpha_1}}$, we obtain the representative $\la\nabla_1+\nabla_3\ra$.
  \end{enumerate}

    \item $\lambda\notin \{0, \pm 1\}$.
    \begin{enumerate}
     \item If $1+\lambda+\lambda^2\neq0$, by choosing $x=y=1$, $a=-\frac{\alpha_4}{1+\lambda+\lambda^2}$, $b=\frac{\alpha_1}{\lambda^2-1}$, $c=\frac{
 \alpha_2 (\lambda^3-1)+\alpha_4(\alpha_1+2\lambda+\alpha_1\lambda^2)}{\lambda(1+\lambda) (\lambda^3-1)}$ and  $d= \frac{\alpha_4}{\lambda^3-1}$, we obtain the representative $\la\nabla_3\ra$.
    \item Suppose $1+\lambda+\lambda^2=0$.
    \begin{enumerate}
    \item If $\alpha_4=0$, by choosing $x=y=1$, $b=\frac{\alpha_1}{\lambda^2-1}$ and  $c=d=\frac{
 \alpha_2}{\lambda^2-\lambda}$, we obtain the representative $\la \nabla_3\ra$.
        \item If $\alpha_4\neq 0$, by choosing  $x=\alpha_4^2$, $y= \alpha_4$, $b=\frac{\alpha_1\alpha_4^2}{\lambda^2-1}$ and $d=\frac{\alpha_4^2 (\alpha_1\alpha_4  - \alpha_2 ( \lambda-1))}{2\lambda (1-\lambda) }$, we obtain the representative $\la\nabla_3+\nabla_4 \ra$.
    \end{enumerate}

              
    \end{enumerate}

\end{enumerate}

Thus, we get the distinct orbits 
\begin{center}
$\la \nabla_2+\nabla_3\ra_{\lambda=0}$,  $\la \nabla_3\ra$,
$\la \nabla_1+\nabla_3\ra_{\lambda=\pm 1} $   and $\la \nabla_3+\nabla_4\ra_{\lambda =\omega, \omega^2}$,
\end{center}
where $\omega$ is the cubic root of unit, which give us the following algebras:
\begin{longtable}{lllllllll}

$\Q_{5,25}$&$:$&$ e_1 e_2 = e_4$&$e_1e_3=e_5$&$ e_2 e_1 = e_5$&$ e_2 e_2 = e_3$&$ e_4e_2=-e_5$\\
$\Q_{5,26}^{\lambda}$&$:$&$e_1 e_2 = e_4$&$ e_1e_3=e_5$&$ e_2 e_1 = \lambda e_4$&$ e_2 e_2 = e_3$&$ e_2e_4=\lambda e_5 $& $ e_3e_1=-\lambda^2e_5$\\&&$ e_4e_2=-e_5$\\
$\Q_{5,27}$&$:$&$e_1e_1=e_5$&$e_1 e_2 = e_4$&$ e_1e_3=e_5$&$ e_2 e_1 = e_4$&$ e_2 e_2 = e_3$& $ e_2e_4= e_5 $\\
&&$ e_3e_1=-e_5$&$ e_4e_2=-e_5$&\\ 
$\Q_{5,28}$&$:$&$e_1e_1=e_5$&$e_1 e_2 = e_4$&$ e_1e_3=e_5$&$ e_2 e_1 = - e_4$&$ e_2 e_2 = e_3$& $ e_2e_4=- e_5 $\\
&&$ e_3e_1=-e_5$&$ e_4e_2=-e_5$&\\
$\Q_{5,29}$&$:$&$e_1 e_2 = e_4$&$ e_1e_3=e_5$&$ e_2 e_1 = \omega e_4$&$ e_2 e_2 = e_3$&$ e_2e_3=e_5$& $ e_2e_4=\omega e_5 $
\\&&$ e_3e_1=-\omega^2e_5$&$ e_3e_2=-e_5$&$ e_4e_2=-e_5$&\\
$\Q_{5,30}$&$:$&$e_1 e_2 = e_4$&$ e_1e_3=e_5$&$ e_2 e_1 = \omega^2 e_4$&$ e_2 e_2 = e_3$&$ e_2e_3=e_5$& $ e_2e_4=\omega^2 e_5 $\\&&$ e_3e_1=-\omega e_5$&$ e_3e_2=-e_5$&$ e_4e_2=-e_5$&
\end{longtable}


\subsection{Classification theorem}\label{secteoA}
Now we are ready to summarize all results related to the algebraic classification of complex antiassociative algebras.

\begin{theoremA}\label{teorA} Let ${\bf A}$ be a complex  $5$-dimensional non-2-step nilpotent antiassociative algebra. Then ${\bf A}$ is isomorphic to one algebra from the following table:
       \begin{longtable}{lllllll}
           \hline
           $\Q_{5,1}$&$: $&$e_1 e_1=e_2 $&$e_1 e_2 =e_3$&$ e_2 e_1=-e_3$ &&\\
\hline
$\Q_{5,2}$&$:$&$e_1 e_1=e_2 $&$ e_1 e_2 =e_4$&$ e_1 e_3 =  e_4$&$ e_2 e_1=-e_4 $&\\
\hline
$\Q_{5,3}$&$:$&$ e_1 e_1=e_2 $&$ e_1  e_2 =e_4 $&$ e_2  e_1=-e_4 $&$e_3  e_3=e_4$&\\
\hline
$\Q_{5,4}$&$:$&$ e_1  e_1=e_2$&$  e_1  e_2=e_4$&$   e_2  e_1=-e_4 $&$  e_3  e_1 =  e_5$&\\ 
\hline 
$\Q_{5,5}^{\lambda}$&$ :$&$  e_1  e_1=e_2$&$  e_1  e_2=e_4$&$  e_1  e_3 =e_5$&$  e_2  e_1=-e_4 $&$  e_3  e_1 = \lambda e_5$\\
\hline 
$\Q_{5,6}$&$ :$&$ e_1  e_1=e_2$&$  e_1e_2=e_4$&$  e_1e_3=e_5$&$  e_2e_1=-e_4$&$ e_3e_3= e_5$\\
\hline 
$\Q_{5,7}$&$ :$&$ e_1  e_1=e_2$&$  e_1e_2=e_4$&$  e_2e_1=-e_4$&$ e_3e_3= e_5$&\\
\hline 
$\Q_{5,8}$&$ : $&$ e_1  e_1=e_2$&$ e_1e_2=e_4$&$  e_1e_3=e_4+e_5$&$  e_2e_1=-e_4$&$  e_3e_1=-e_5$\\
\hline 
$\Q_{5,9}$&$  :$&$ e_1  e_1=e_2$&$  e_1e_2=e_4$&$e_1e_3=e_4$&$  e_2e_1=-e_4$&$  e_3e_3=e_5$\\
\hline 
$\Q_{5,10}$&$ : $&$ e_1  e_1=e_2$&$ e_1e_2=e_4$&$  e_1e_3=e_4$\\&&$  e_2e_1=-e_4$&$  e_3e_1=e_5$ &$  e_3e_3=e_5$ \\
\hline 
$\Q_{5,11}$&$  :$&$ e_1  e_1=e_2$&$  e_1e_2=e_4$&$  e_2e_1=-e_4$&$  e_3e_1=e_5$&$e_3e_3=e_4$\\
\hline 
$\Q_{5,12}^{\lambda}$&$  :$&$  e_1  e_1=e_2$&$  e_1e_2=e_4$&$  e_1e_3=e_5$\\& &$  e_2e_1=-e_4$&$  e_3e_1=\lambda e_5$ &$ e_3e_3=e_4$ \\
\hline
          $\Q_{5,13}$& $:$ & $e_1e_1=e_2$ & $ e_1e_2=e_5$& $e_1e_4=e_5$ & $e_2e_1=-e_5$ & $ e_3e_3= e_5$ \\
       
\hline
$\Q_{5,14}^{\lambda}$& $:$ &  $e_1e_1=e_2$ &   $ e_1e_2=e_5$& $e_2e_1=-e_5$ & $ e_3e_4=e_5$ & $ e_4e_3=\lambda e_5 $\\
\hline

$\Q_{5,15}$ & $:$ &  $ e_1e_1=e_2$  & $ e_1e_2=e_5$&$  e_1e_3=e_5$\\&& $e_2e_1=-e_5$  & $ e_3e_4=e_5$ &$e_4e_3=-e_5$ \\
\hline
$\Q_{5,16}$& $:$ &  $ e_1e_1=e_2$  & $ e_1e_2=e_5$ & $e_1e_4= e_5$ & $e_2e_1=-e_5$\\&&$e_3e_3=e_5 $  &$ e_3e_4=e_5$ & $e_4e_3=-e_5$ \\
\hline
$\Q_{5,17}$& $:$ &  $ e_1e_1=e_2$  & $ e_1e_2=e_5$& $e_2e_1=-e_5$ \\&& $e_3e_3=e_5 $ &$ e_3e_4=e_5$  & $e_4e_3=-e_5$ \\
\hline
          $\Q_{5,18}$&$:$&$ e_1e_1=e_2$&$e_1e_2=e_5$&$e_2e_1=-e_5$\\&&$ e_3 e_3=e_4$&$ e_3e_4=e_5$ &$e_4e_3=-e_5$ \\
          \hline
$\Q_{5,19}$&$:$&$ e_1e_1=e_2$&$e_1e_2=e_5$&$ e_1e_3=  e_5$&$e_2e_1=-e_5$\\&&$  e_3 e_3=e_4$ &$ e_3e_4=e_5$&$e_4e_3=-e_5$   \\
\hline
         $\Q_{5,20}$ & $:$&$ e_1 e_2 = e_3$&$  e_1e_4= e_5 $&$ e_2 e_1 = e_4$\\&&$  e_2 e_2 =-e_3$&$  e_2 e_4=-e_5$ &$  e_3 e_1= -e_5$ \\
         \hline
$\Q_{5,21}$ & $:$&$ e_1 e_2 = e_3+e_5$&$  e_1e_4= e_5 $&$ e_2 e_1 = e_4$\\&&$  e_2 e_2 =-e_3$&$  e_2 e_4=-e_5$ &$  e_3 e_1= -e_5$ \\
\hline
             $\Q_{5,22}$&$:$&$  e_1 e_2 = e_3$&$  e_1e_4=e_5$&$  e_2 e_1 = e_4$&$ e_2e_2=e_5$&$e_3e_1=-e_5$\\
             \hline
 $\Q_{5,23}$&$:$&$  e_1 e_2 = e_3$&$ e_1e_4=e_5$&$ e_2 e_1 = e_4$\\&&$ e_2e_3=e_5$&$e_3e_1=-e_5$ &$e_4e_2=-e_5$ \\
 \hline
 $\Q_{5,24}$&$: $&$ e_1 e_2 = e_3$&$ e_2 e_1 = e_4$&$  e_2e_3=e_5$&$e_4e_2=-e_5$&\\
 \hline
$\Q_{5,25}$&$:$&$ e_1 e_2 = e_4$&$e_1e_3=e_5$&$ e_2 e_1 = e_5$&$ e_2 e_2 = e_3$&$ e_4e_2=-e_5$\\
\hline 
$\Q_{5,26}^{\lambda}$&$:$&$e_1 e_2 = e_4$&$ e_1e_3=e_5$&$ e_2 e_1 = \lambda e_4$&$ e_2 e_2 = e_3$\\&&$ e_2e_4=\lambda e_5 $ & $ e_3e_1=-\lambda^2e_5$&$ e_4e_2=-e_5$ \\
\hline
$\Q_{5,27}$&$:$&$e_1e_1=e_5$&$e_1 e_2 = e_4$&$ e_1e_3=e_5$&$ e_2 e_1 = e_4$\\&&$ e_2 e_2 = e_3$ & $ e_2e_4= e_5 $&$ e_3e_1=-e_5$& $ e_4e_2=-e_5$ \\
\hline
$\Q_{5,28}$&$:$&$e_1e_1=e_5$&$e_1 e_2 = e_4$&$ e_1e_3=e_5$&$ e_2 e_1 = - e_4$\\&&$ e_2 e_2 = e_3$ & $ e_2e_4=- e_5 $&$ e_3e_1=-e_5$& $ e_4e_2=-e_5$ \\
\hline
$\Q_{5,29}$&$:$&$e_1 e_2 = e_4$&$ e_1e_3=e_5$&$ e_2 e_1 = \omega e_4$&$ e_2 e_2 = e_3$&$ e_2e_3=e_5$\\&&
 $ e_2e_4=\omega e_5 $&$ e_3e_1=-\omega^2e_5$&$ e_3e_2=-e_5$&$ e_4e_2=-e_5$&\\
\hline
$\Q_{5,30}$&$:$&$e_1 e_2 = e_4$&$ e_1e_3=e_5$&$ e_2 e_1 = \omega^2 e_4$&$ e_2 e_2 = e_3$&$ e_2e_3=e_5$\\&& $ e_2e_4=\omega^2 e_5 $&$ e_3e_1=-\omega e_5$&$ e_3e_2=-e_5$&$ e_4e_2=-e_5$&\\
\hline
\hline
\end{longtable}
In the table above, $\omega$ is the primitive cubic root of unit.
\end{theoremA}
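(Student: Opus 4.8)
The plan is to establish that the list $\Q_{5,1},\dots,\Q_{5,30}$ is both complete and irredundant by sorting every complex $5$-dimensional non-$2$-step nilpotent antiassociative algebra $\mathbf{A}$ according to whether it has an annihilator component and, if not, according to $m=\dim\operatorname{Ann}(\mathbf{A})$, thereby reducing the statement to the central-extension computations already carried out in the preceding subsections. First I would dispose of the algebras carrying an annihilator component. If $\mathbf{A}=\mathbf{A}_0\oplus I$ with $0\neq I\subseteq\operatorname{Ann}(\mathbf{A})$, I choose $\mathbf{A}_0$ maximal without annihilator component; then $\mathbf{A}_0$ is a non-$2$-step nilpotent antiassociative algebra of dimension $3$ or $4$ with no annihilator component, so by the classification of Section~\ref{4dim} it is one of $\Q_{3,1}$, $\Q_{4,2}$, $\Q_{4,3}$ (note $\Q_{4,1}=\Q_{3,1}\oplus\mathbb{C}$ is itself decomposable and hence excluded here). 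Matching dimensions, so that $I\cong\mathbb{C}^2$ in the first case and $I\cong\mathbb{C}$ in the other two, recovers exactly $\Q_{5,1}$, $\Q_{5,2}$, $\Q_{5,3}$.

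Next, for $\mathbf{A}$ without annihilator component, Lemma~1 yields $\mathbf{A}\cong\mathbf{A}'_{\theta}$ with $\mathbf{A}'=\mathbf{A}/\operatorname{Ann}(\mathbf{A})$ of dimension $5-m$ and $\theta\in\mathrm{Z}^2(\mathbf{A}',\mathbb{V})$ satisfying $\operatorname{Ann}(\mathbf{A}')\cap\operatorname{Ann}(\theta)=0$. The crucial structural observation is that in any antiassociative algebra every product of four or more factors vanishes, so $\mathbf{A}^3\cdot\mathbf{A}=\mathbf{A}\cdot\mathbf{A}^3=0$, giving $\mathbf{A}^3\subseteq\operatorname{Ann}(\mathbf{A})$ and hence $(\mathbf{A}')^3=0$; thus the base $\mathbf{A}'$ is \emph{always} $2$-step nilpotent. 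Conversely, since $(\mathbf{A}')^2\subseteq\operatorname{Ann}(\mathbf{A}')$ for a $2$-step nilpotent algebra, the condition $\operatorname{Ann}(\mathbf{A}')\cap\operatorname{Ann}(\theta)=0$ forces $\theta$ not to vanish on $(\mathbf{A}')^2$; then for suitable $x,y,z$ the one-line computation $[[x,y],z]_{\mathbf{A}_\theta}=(xy)z+\theta(xy,z)=\theta(xy,z)\neq0$ shows $\mathbf{A}$ is non-$2$-step whenever $(\mathbf{A}')^2\neq0$. Therefore the non-split non-$2$-step $5$-dimensional algebras are precisely the non-split central extensions of the $2$-step nilpotent antiassociative algebras $\mathbf{A}'$ with $(\mathbf{A}')^2\neq0$ and $\dim\mathbf{A}'=5-m\in\{2,3,4\}$, and no separate $2$-step/non-$2$-step filtering of the orbits is required.

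It then remains to run the extension procedure for each admissible base. For $\dim\mathbf{A}'=2$ there is no non-$2$-step $3$-dimensional extension, as already noted. For $\dim\mathbf{A}'=3$ the $2$-step nilpotent antiassociative algebras are $\A_{3,1},\A_{3,2},\A_{3,3},\A^{\lambda}_{3,4}$, and since $\mathbf{T}_2(\A_{3,i})=\emptyset$ for $i=2,3,4$, only $\A_{3,1}$ contributes; its $\operatorname{Aut}$-orbits on $\mathbf{T}_2(\A_{3,1})$ yield $\Q_{5,4}$–$\Q^{\lambda}_{5,12}$. For $\dim\mathbf{A}'=4$ the candidate bases are $\A_{4,1},\dots,\A_{4,15}$, of which only $\A_{4,1},\A_{4,2},\A_{4,7},\A_{4,12},\A^{\lambda}_{4,14}$ have $\mathbf{T}_1\neq\emptyset$, and their orbit computations produce $\Q_{5,13}$–$\Q_{5,30}$. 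By the one-to-one correspondence between $\operatorname{Aut}(\mathbf{A}')$-orbits on $\mathbf{T}_s(\mathbf{A}')$ and isomorphism classes of non-split central extensions established above, distinctness \emph{within} each family is immediate from the orbit analysis, while distinctness \emph{across} different bases, across different values of $m$, and against the three split algebras I would settle using the isomorphism invariants $\dim\operatorname{Ann}(\mathbf{A})$, $\dim\mathbf{A}^2$, $\dim\mathbf{A}^3$, and the ranks of the associated bilinear forms.

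The main obstacle is twofold. The bulk of the labour is the explicit determination, for each admissible base, of the $\operatorname{Aut}(\mathbf{A}')$-orbits on $\mathbf{T}_s(\mathbf{A}')$ via the matrix-conjugation case analyses already performed, together with the verification that $\mathbf{T}_s=\emptyset$ for every excluded base algebra. The subtler point is irredundancy across the different sources: one must certify that no algebra coming from a $4$-dimensional base coincides with one obtained from $\A_{3,1}$ or with a split extension, and this is precisely where careful bookkeeping of the numerical invariants above is indispensable.
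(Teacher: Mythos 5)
Your proposal is correct and takes essentially the same route as the paper: split off the algebras with an annihilator component (recovering $\Q_{5,1}$, $\Q_{5,2}$, $\Q_{5,3}$), then invoke the Skjelbred--Sund correspondence for exactly the same admissible bases ($\A_{3,1}$ with $s=2$, and $\A_{4,1}$, $\A_{4,2}$, $\A_{4,7}$, $\A_{4,12}$, $\A_{4,14}^{\lambda}$ with $s=1$) together with the orbit computations already carried out in the preceding subsections. Your explicit justification that the quotient ${\bf A}/\operatorname{Ann}({\bf A})$ is always $2$-step nilpotent (via ${\bf A}^3\subseteq\operatorname{Ann}({\bf A})$), and that the condition $\operatorname{Ann}({\bf A}')\cap\operatorname{Ann}(\theta)=0$ automatically makes every such extension non-$2$-step nilpotent when $({\bf A}')^2\neq 0$, is a useful spelling-out of a step the paper leaves implicit, but it does not alter the overall strategy.
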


\section{The geometric classification of nilpotent  algebras}

\subsection{Definitions and notation}
Given an $n$-dimensional vector space $\mathbb V$, the set ${\rm Hom}(\mathbb V \otimes \mathbb V,\mathbb V) \cong \mathbb V^* \otimes \mathbb V^* \otimes \mathbb V$
is a vector space of dimension $n^3$. This space has the structure of the affine variety $\mathbb{C}^{n^3}.$ Indeed, let us fix a basis $e_1,\dots,e_n$ of $\mathbb V$. Then any $\mu\in {\rm Hom}(\mathbb V \otimes \mathbb V,\mathbb V)$ is determined by $n^3$ structure constants $c_{ij}^k\in\mathbb{C}$ such that
$\mu(e_i\otimes e_j)=\sum\limits_{k=1}^nc_{ij}^ke_k$. A subset of ${\rm Hom}(\mathbb V \otimes \mathbb V,\mathbb V)$ is {\it Zariski-closed} if it can be defined by a set of polynomial equations in the variables $c_{ij}^k$ ($1\le i,j,k\le n$).

Let $T$ be a set of polynomial identities.
The set of algebra structures on $\mathbb V$ satisfying polynomial identities from $T$ forms a Zariski-closed subset of the variety ${\rm Hom}(\mathbb V \otimes \mathbb V,\mathbb V)$. We denote this subset by $\mathbb{L}(T)$.
The general linear group ${\rm GL}(\mathbb V)$ acts on $\mathbb{L}(T)$ by conjugations:
$$ (g * \mu )(x\otimes y) = g\mu(g^{-1}x\otimes g^{-1}y)$$
for $x,y\in \mathbb V$, $\mu\in \mathbb{L}(T)\subset {\rm Hom}(\mathbb V \otimes\mathbb V, \mathbb V)$ and $g\in {\rm GL}(\mathbb V)$.
Thus, $\mathbb{L}(T)$ is decomposed into ${\rm GL}(\mathbb V)$-orbits that correspond to the isomorphism classes of algebras.
Let $\mathcal{O}(\mu)$ denote the orbit of $\mu\in\mathbb{L}(T)$ under the action of ${\rm GL}(\mathbb V)$ and $\overline{\mathcal{O}(\mu)}$ denote the Zariski closure of $\mathcal{O}(\mu)$.

Let $\mathcal A$ and $\mathcal B$ be two $n$-dimensional algebras satisfying the identities from $T$, and let $\mu,\lambda \in \mathbb{L}(T)$ represent $\mathcal A$ and $\mathcal B$, respectively.
We say that $\mathcal A$ degenerates to $\mathcal B$ and write $\mathcal A\to \mathcal B$ if $\lambda\in\overline{\mathcal{O}(\mu)}$.
Note that in this case we have $\overline{\mathcal{O}(\lambda)}\subset\overline{\mathcal{O}(\mu)}$. Hence, the definition of a degeneration does not depend on the choice of $\mu$ and $\lambda$. If $\mathcal A\not\cong \mathcal B$, then the assertion $\mathcal A\to \mathcal B$ is called a {\it proper degeneration}. We write $\mathcal A\not\to \mathcal B$ if $\lambda\not\in\overline{\mathcal{O}(\mu)}$.

Let $\mathcal A$ be represented by $\mu\in\mathbb{L}(T)$. Then  $\mathcal A$ is  {\it rigid} in $\mathbb{L}(T)$ if $O(\mu)$ is an open subset of $\mathbb{L}(T)$.
 Recall that a subset of a variety is called irreducible if it cannot be represented as a union of two non-trivial closed subsets.
 A maximal irreducible closed subset of a variety is called an {\it irreducible component}.
It is well known that any affine variety can be represented as a finite union of its irreducible components in a unique way.
The algebra $\mathcal A$ is rigid in $\mathbb{L}(T)$ if and only if $\overline{O(\mu)}$ is an irreducible component of $\mathbb{L}(T)$.

Given the spaces $U$ and $W$, we write simply $U>W$ instead of $\dim \,U>\dim \,W$.



\subsection{Method of the description of  degenerations of algebras}

In the present work we use the methods applied to Lie algebras in \cite{BC99,GRH,GRH2}.
First of all, if $\mathcal A\to \mathcal B$ and $\mathcal A\not\cong \mathcal B$, then $\mathfrak{Der}(\mathcal A)<\mathfrak{Der}(\mathcal B)$, where $\mathfrak{Der}(\mathcal A)$ is the Lie algebra of derivations of $\mathcal A$. We compute the dimensions of algebras of derivations and check the assertion $\mathcal A\to \mathcal B$ only for such $\mathcal A$ and $\mathcal B$ that $\mathfrak{Der}(\mathcal A)<\mathfrak{Der}(\mathcal B)$.

To prove degenerations, we construct families of matrices parametrized by $t$. Namely, let $\mathcal A$ and $\mathcal B$ be two algebras represented by the structures $\mu$ and $\lambda$ from $\mathbb{L}(T)$ respectively. Let $e_1,\dots, e_n$ be a basis of $\mathbb  V$ and $c_{ij}^k$ ($1\le i,j,k\le n$) be the structure constants of $\lambda$ in this basis. If there exist $a_i^j(t)\in\mathbb{C}$ ($1\le i,j\le n$, $t\in\mathbb{C}^*$) such that $E_i^t=\sum\limits_{j=1}^na_i^j(t)e_j$ ($1\le i\le n$) form a basis of $\mathbb V$ for any $t\in\mathbb{C}^*$, and the structure constants of $\mu$ in the basis $E_1^t,\dots, E_n^t$ are such rational functions $c_{ij}^k(t)\in\mathbb{C}[t]$ that $c_{ij}^k(0)=c_{ij}^k$, then $\mathcal A\to \mathcal B$.
In this case  $E_1^t,\dots, E_n^t$ is called a {\it parametrized basis} for $\mathcal A\to \mathcal B$.
To simplify our equations, we will use the notation $A_i=\langle e_i,\dots,e_n\rangle,\ i=1,\ldots,n$ and write simply $A_pA_q\subset A_r$ instead of $c_{ij}^k=0$ ($i\geq p$, $j\geq q$, $k< r$).

Since the variety of $5$-dimensional antiassociative algebras  contains infinitely many non-isomorphic algebras, we have to do some additional work.
Let $\mathcal A(*):=\{\mathcal A(\alpha)\}_{\alpha\in I}$ be a series of algebras, and let $\mathcal B$ be another algebra. Suppose that for $\alpha\in I$, $\mathcal A(\alpha)$ is represented by the structure $\mu(\alpha)\in\mathbb{L}(T)$ and $B\in\mathbb{L}(T)$ is represented by the structure $\lambda$. Then we say that $\mathcal A(*)\to \mathcal B$ if $\lambda\in\overline{\{\mathcal{O}(\mu(\alpha))\}_{\alpha\in I}}$, and $\mathcal A(*)\not\to \mathcal B$ if $\lambda\not\in\overline{\{\mathcal{O}(\mu(\alpha))\}_{\alpha\in I}}$.

Let $\mathcal A(*)$, $\mathcal B$, $\mu(\alpha)$ ($\alpha\in I$) and $\lambda$ be as above. To prove $\mathcal A(*)\to \mathcal B$ it is enough to construct a family of pairs $(f(t), g(t))$ parametrized by $t\in\mathbb{C}^*$, where $f(t)\in I$ and $g(t)\in {\rm GL}(\mathbb V)$. Namely, let $e_1,\dots, e_n$ be a basis of $\mathbb V$ and $c_{ij}^k$ ($1\le i,j,k\le n$) be the structure constants of $\lambda$ in this basis. If we construct $a_i^j:\mathbb{C}^*\to \mathbb{C}$ ($1\le i,j\le n$) and $f: \mathbb{C}^* \to I$ such that $E_i^t=\sum\limits_{j=1}^na_i^j(t)e_j$ ($1\le i\le n$) form a basis of $\mathbb V$ for any  $t\in\mathbb{C}^*$, and the structure constants of $\mu_{f(t)}$ in the basis $E_1^t,\dots, E_n^t$ are such rational functions $c_{ij}^k(t)\in\mathbb{C}[t]$ that $c_{ij}^k(0)=c_{ij}^k$, then $\mathcal A(*)\to \mathcal B$. In this case  $E_1^t,\dots, E_n^t$ and $f(t)$ are called a parametrized basis and a {\it parametrized index} for $\mathcal A(*)\to \mathcal B$, respectively.

One can also use the following  Lemma, whose proof is the same as the proof of Lemma 1.5 from \cite{GRH}.

\begin{lemma}\label{gmain}
Let $\mathfrak{B}$ be a Borel subgroup of ${\rm GL}(\mathbb V)$ and $\mathcal{R}\subset \mathbb{L}(T)$ be a $\mathfrak{B}$-stable closed subset.
If $\mathcal A(*) \to \mathcal B$ and for any $\alpha\in I$ the algebra $\mathcal A(\alpha)$ can be represented by a structure $\mu(\alpha)\in\mathcal{R}$, then there is $\lambda\in \mathcal{R}$ representing $\mathcal B$.
\end{lemma}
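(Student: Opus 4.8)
The plan is to reduce the statement to a single topological fact: that the sweep $G*\mathcal R:=\{g*r:g\in G,\ r\in\mathcal R\}$ is Zariski-closed in $\mathbb L(T)$, where $G={\rm GL}(\mathbb V)$. Granting this, the conclusion follows at once. Since every $\mu(\alpha)$ lies in $\mathcal R$, we have $\bigcup_{\alpha\in I}\mathcal O(\mu(\alpha))\subseteq G*\mathcal R$; taking Zariski closures and invoking $\mathcal A(*)\to\mathcal B$ shows that the structure $\lambda$ representing $\mathcal B$ satisfies $\lambda\in\overline{\bigcup_{\alpha\in I}\mathcal O(\mu(\alpha))}\subseteq\overline{G*\mathcal R}=G*\mathcal R$. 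Thus $\lambda=g*r$ for some $g\in G$ and $r\in\mathcal R$, whence $r=g^{-1}*\lambda\in\mathcal O(\lambda)\cap\mathcal R$; being ${\rm GL}(\mathbb V)$-conjugate to $\lambda$, this $r$ represents $\mathcal B$ and lies in $\mathcal R$, as desired.

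So the entire content is the closedness of $G*\mathcal R$, and this is precisely where the Borel hypothesis is used. The key input is that, $\mathfrak B$ being a Borel subgroup, the flag variety $G/\mathfrak B$ is complete (projective); hence the projection $p\colon(G/\mathfrak B)\times\mathbb L(T)\to\mathbb L(T)$ is a proper, and therefore closed, morphism. I would introduce the incidence set
$$Z=\bigl\{(g\mathfrak B,\,y)\in(G/\mathfrak B)\times\mathbb L(T)\ :\ g^{-1}*y\in\mathcal R\bigr\}.$$
First I would verify that $Z$ is well defined: if $g\mathfrak B=g'\mathfrak B$, write $g'=gb$ with $b\in\mathfrak B$, so that $g'^{-1}*y=b^{-1}*(g^{-1}*y)$; since $\mathcal R$ is $\mathfrak B$-stable, $g^{-1}*y\in\mathcal R$ exactly when $g'^{-1}*y\in\mathcal R$. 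Next I would show $Z$ is closed: its preimage under the quotient morphism $\pi\times{\rm id}\colon G\times\mathbb L(T)\to(G/\mathfrak B)\times\mathbb L(T)$ is $\{(g,y):g^{-1}*y\in\mathcal R\}$, which is the preimage of the closed set $\mathcal R$ under the morphism $(g,y)\mapsto g^{-1}*y$ and hence closed; because $\pi\times{\rm id}$ is open and surjective, closedness descends to $Z$.

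Finally, the image of $Z$ under $p$ is $p(Z)=\{y:\exists\,g\in G,\ g^{-1}*y\in\mathcal R\}=G*\mathcal R$. Since $Z$ is closed and $p$ is proper, $p(Z)=G*\mathcal R$ is closed, which completes the reduction. The main obstacle is exactly this closedness assertion: for an arbitrary closed $H$-stable subset $\mathcal R$ with $H\le G$ a general subgroup, $G*\mathcal R$ need not be closed, so it is essential that $\mathfrak B$ is Borel, i.e.\ that $G/\mathfrak B$ is complete, which is what forces $p$ to be proper. A secondary point requiring care is that the closedness of $Z$ should be checked through the quotient $\pi$ rather than asserted directly.
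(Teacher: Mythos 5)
Your proof is correct and follows essentially the same route as the paper's, which simply invokes the proof of Lemma 1.5 of Grunewald--O'Halloran \cite{GRH}: that argument likewise reduces the statement to the closedness of ${\rm GL}(\mathbb V)*\mathcal{R}$, proved via the completeness of the flag variety ${\rm GL}(\mathbb V)/\mathfrak{B}$ and the properness of the projection from $({\rm GL}(\mathbb V)/\mathfrak{B})\times\mathbb{L}(T)$. Your reconstruction of the incidence-variety argument, including the descent of closedness along the quotient map and the final conjugation step, is exactly the standard proof being referenced.
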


\subsection{The geometric classification of  
antiassociative algebras}The main result of the present section is the following theorem.
\begin{theoremB}
The variety of $4$-dimensional antiassociative algebras has dimension $12$ and it has $3$ irreducible components defined by  
\begin{center}$
\mathcal{C}_1=\overline{\{\mathcal{O}(\A_{4,8}^{\gamma})\}}, \, 
\mathcal{C}_2= \overline{\{\mathcal{O}(\A_{4,9}^{\alpha})\}}, \,   
\mathcal{C}_3=\overline{\mathcal{O}(\Q_{4,3})}.$
\end{center}
In particular, there is only one rigid $4$-dimensional antiassociative algebra $\Q_{4,3}$.

The variety of $5$-dimensional antiassociative algebras has dimension $24$ and it has $8$ irreducible components defined by 
\begin{center}
$\mathcal{C}_1=\overline{\{\mathcal{O}({\mathfrak V}_{3+2})\}},$ 
 $\mathcal{C}_2=\overline{\{\mathcal{O}({\mathfrak V}_{4+1})\}},$
 $\mathcal{C}_3=\overline{\mathcal{O}(\Q_{5,10})}, $ 
 $\mathcal{C}_4=\overline{\{\mathcal{O}(\Q_{5,14}^{\lambda})\}}, $\\
 $\mathcal{C}_5=\overline{\mathcal{O}(\Q_{5,19})}, $ 
 $\mathcal{C}_6=\overline{\mathcal{O}(\Q_{5,21})}, $ 
 $\mathcal{C}_7=\overline{\mathcal{O}(\Q_{5,23})}, $
 $\mathcal{C}_{8}=\overline{\{O(\Q_{5,26}^{\lambda})\}},$ 
 \end{center}
 In particular, there are only   four rigid algebras in this variety:
 \begin{center}
     $\Q_{5,10},$ 
 $\Q_{5,19},$ 
 $\Q_{5,21}, $  
and $\Q_{5,23}.$ 
 
 \end{center}
\end{theoremB}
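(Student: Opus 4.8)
The plan is to combine the complete algebraic classification with an explicit analysis of degenerations. We already have, from Theorem~\ref{teorA} and the preceding sections, the full list of non-$2$-step nilpotent antiassociative algebras together with the finitely many $2$-step nilpotent ones of dimension $4$ (from \cite{kppv}); for dimension $5$ the $2$-step nilpotent part is wild, so there we import the geometric classification of \cite{ikp20}, whose relevant irreducible components are exactly the closures $\overline{\{\mathcal{O}(\mathfrak V_{3+2})\}}$ and $\overline{\{\mathcal{O}(\mathfrak V_{4+1})\}}$. The first computational step is to determine, for every algebra and every one-parameter family on these lists, the dimension of the derivation algebra $\mathfrak{Der}$. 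Since the orbit of an $n$-dimensional algebra $\mathcal A$ under ${\rm GL}(\mathbb V)$ has dimension $n^2-\dim\mathfrak{Der}(\mathcal A)$, this yields all orbit dimensions; adding $1$ for each genuine parameter gives the dimension of each family's orbit closure. The dimension of the whole variety is then the maximum of these numbers, which we expect to be $12$ for $n=4$ and $24$ for $n=5$, realised respectively by $\overline{\mathcal{O}(\Q_{4,3})}$ and by several of the proposed $5$-dimensional components.

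Second, I would show that the listed orbit closures cover the entire variety. Concretely, every algebra not appearing among the generators of $\mathcal C_1,\dots,\mathcal C_8$ must be exhibited as a degeneration of one of them by writing down an explicit parametrized basis $E_i^t=\sum_j a_i^j(t)e_j$ whose structure constants specialise at $t=0$ to those of the target, as described before Lemma~\ref{gmain}. Here the closedness of the identity $xyz=0$ is a useful organising principle: $2$-step nilpotency is a ${\rm GL}(\mathbb V)$-stable closed condition, so the closure of a $2$-step orbit contains only $2$-step algebras, and a non-$2$-step algebra can never be a degeneration of a $2$-step one. This splits the degeneration bookkeeping into a non-$2$-step layer, handled with the explicit families $\Q_{5,i}$, and a $2$-step layer, handled through \cite{ikp20}.

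Third comes the separation of the components, i.e.\ proving that none of the proposed closures is contained in another. The derivation-dimension inequality $\mathfrak{Der}(\mathcal A)<\mathfrak{Der}(\mathcal B)$ valid for a proper degeneration $\mathcal A\to\mathcal B$ is the first filter and already rules out most spurious containments. For the remaining hard cases---most notably showing that the $2$-step components $\overline{\{\mathcal{O}(\mathfrak V_{3+2})\}}$ and $\overline{\{\mathcal{O}(\mathfrak V_{4+1})\}}$ are not swallowed by the closure of some non-$2$-step orbit, and that the one-parameter families $\Q_{5,14}^{\lambda}$ and $\Q_{5,26}^{\lambda}$ (and, for $n=4$, $\A_{4,8}^{\gamma}$ and $\A_{4,9}^{\alpha}$) give genuinely maximal closures---I would invoke Lemma~\ref{gmain}: one exhibits a Borel-stable closed subset $\mathcal R\subset\mathbb{L}(T)$ containing the putative dominating family but missing the target, which forbids the degeneration. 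Once every algebra is placed inside a maximal closure and these closures are shown pairwise incomparable, the counts of irreducible components ($3$ and $8$) follow, and the rigid algebras are precisely those whose single orbit (no parameter) already attains maximal dimension and receives no proper degeneration, namely $\Q_{4,3}$ in dimension $4$ and $\Q_{5,10}$, $\Q_{5,19}$, $\Q_{5,21}$, $\Q_{5,23}$ in dimension $5$.

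The main obstacle is the wild $2$-step nilpotent stratum in dimension $5$: because there is no finite isomorphism classification there, every statement about those orbits must be routed through the external description in \cite{ikp20}, and in particular the non-degeneration arguments separating $\mathfrak V_{3+2}$ and $\mathfrak V_{4+1}$ from the non-$2$-step components cannot be carried out orbit-by-orbit and instead require the Borel-stable-invariant machinery of Lemma~\ref{gmain}. The secondary difficulty is the sheer volume of explicit parametrized-basis constructions needed to establish every covering degeneration among the thirty-odd algebras $\Q_{5,i}$; these are routine but numerous, and particular care is needed for the parametrized families, where a single parametrized index $f(t)$ must handle a whole one-parameter series at once.
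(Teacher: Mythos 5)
Your overall architecture --- the algebraic lists, the imported geometric classification of the $2$-step stratum, explicit parametrized bases for coverage, and derivation/orbit-dimension counts for separation --- is the paper's architecture, and for the $4$-dimensional half your plan goes through exactly as the paper's does ($\Q_{4,3}\to\Q_{4,2}\to\Q_{4,1}$ absorbs the non-$2$-step layer, and the dimension counts do the rest). The genuine gap is in dimension $5$. The classification of \cite{ikp20} gives \emph{three} irreducible components of the $2$-step nilpotent stratum, namely ${\mathfrak V}_{4+1}$, ${\mathfrak V}_{3+2}$ \emph{and} ${\mathfrak V}_{2+3}$; you silently discard the third by declaring the first two to be the ``relevant'' ones. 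But the irrelevance of ${\mathfrak V}_{2+3}$ is itself a theorem that must be proved: to obtain $8$ components rather than $9$ one must show that the entire family ${\mathfrak V}_{2+3}$ lies inside $\overline{\{\mathcal{O}(\Q_{5,26}^{\lambda})\}}$. The paper does this with one of the most delicate constructions of the whole proof, a cross-layer degeneration with parametrized index $\lambda(t)=\frac{(1+L)(t^2-1)}{L-1+t^2(1+L)}$ and an elaborate parametrized basis, realizing the member of ${\mathfrak V}_{2+3}$ with parameter $\lambda=\frac{L^2-1}{4}$ as a limit of non-$2$-step algebras. Nothing in your proposal produces, or even calls for, such a degeneration.

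Moreover, this missing step is not something you could repair inside your framework as stated, because it contradicts your own organising principle that the bookkeeping ``splits into a non-$2$-step layer, handled with the explicit families $\Q_{5,i}$, and a $2$-step layer, handled through \cite{ikp20}.'' Your observation that $xyz=0$ is a closed $\mathrm{GL}(\mathbb V)$-stable condition is correct, but it gives only one-way independence: $2$-step algebras cannot degenerate to non-$2$-step ones, while degenerations in the opposite direction are perfectly possible, and the decisive step of the $5$-dimensional proof is exactly such a degeneration from the non-$2$-step family $\Q_{5,26}^{\lambda}$ onto the $2$-step family ${\mathfrak V}_{2+3}$. Handling the $2$-step layer ``through \cite{ikp20}'' alone would leave ${\mathfrak V}_{2+3}$ standing as a ninth component, so the theorem, as you would prove it, would come out false. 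The remainder of your plan --- the coverage degenerations among the $\Q_{5,i}$, and the separation of the surviving components via derivation dimensions, orbit-closure dimensions and, where needed, Lemma~\ref{gmain} --- is consistent with what the paper actually does.
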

\begin{Proof}
Thanks to \cite{kppv}, the variety of $4$-dimensional $2$-step nilpotent antiassociative algebras has dimension $12$ and it has $2$ irreducible components. All needed degenerations are given in the table below:

\begin{longtable}{|lcllllll|}
\hline 
$\Q_{4,2}$&$\to$&$\Q_{4,1}$ & 
$E_1^t=e_1$ & $E_2^t=e_2$&$E_3^t=e_4$ & $ E_4^t= t e_3$& \\
\hline 
$\Q_{4,3}$&$\to$&$\Q_{4,2}$ & 
$E_1^t=e_1+ \frac{1}{2t}e_2$ & $E_2^t=e_3 +\frac{1}{4t^2}e_4 $&$E_3^t=t e_2+\frac{1}{2}e_3$ & $ E_4^t= e_4$ &\\ \hline

\end{longtable}
Since  $\overline{\mathcal{O}(\Q_{4,3})}$ has dimension $12$,  the variety of $4$-dimensional antiassociative algebras has $3$ irreducible components.

\medskip

Thanks to \cite[Theorem A]{ikp20}, the variety of $5$-dimensional $2$-step nilpotent antiassociative algebras has dimension $24$ and it has $3$ irreducible components.

\begin{longtable}{lllllll}
${\mathfrak V}_{4+1}$ & $:$&  
$e_1e_2=e_5$& $e_2e_1=\lambda e_5$ &$e_3e_4=e_5$&$e_4e_3=\mu e_5$\\

${\mathfrak V}_{3+2}$ &$ :$&
$e_1e_1 =  e_4$& $e_1e_2 = \mu_1 e_5$ & $e_1e_3 =\mu_2 e_5$& 
$e_2e_1 = \mu_3 e_5$  & $e_2e_2 = \mu_4 e_5$  \\
& & $e_2e_3 = \mu_5 e_5$  & $e_3e_1 = \mu_6 e_5$  & \multicolumn{2}{l}{$e_3e_2 = \lambda e_4+ \mu_7 e_5$ } & $e_3e_3 =  e_5$  \\

${\mathfrak V}_{2+3}$ &$ :$&
$e_1e_1 = e_3 + \lambda e_5$& $e_1e_2 = e_3$ & $e_2e_1 = e_4$& $e_2e_2 = e_5$

\end{longtable}

Thanks to the first part of this theorem, 
all $5$-dimensional split antiassociative algebras are in orbit closure 
of families ${\mathfrak V}_{4+1}$ and ${\mathfrak V}_{3+2},$
and 
$\overline{\mathcal{O}(\Q_{5,3})}.$

Let us give some important degenerations.
 
\begin{longtable}{|lcll|}

\hline

$\Q_{5,10}$&$\to$& $\Q_{5,5}^\lambda$ &$E_{1}^{t}=\frac{t}{\lambda -1}e_1+\frac{t}{(\lambda -1)^2}e_3$ \\
\multicolumn{3}{|l}{$E_{2}^{t}=\frac{t^2}{(\lambda -1)^2}e_2+\frac{t^2}{(\lambda -1)^3}e_4+\frac{\lambda t^2}{(\lambda -1)^4}e_5$} & $E_{3}^{t}= \frac{t^3}{(\lambda -1)^2}e_3$ \\
\multicolumn{3}{|l}{$E_{4}^{t}=\frac{t^3}{(\lambda -1)^3}e_4$} &$E_{5}^{t}=\frac{t^4}{(\lambda -1)^4}e_5$ \\
\hline
$\Q_{5,24}$&$\to$& $\Q_{5,8}$ &$E_{1}^{t}=e_1+e_2$ \\
\multicolumn{3}{|l}{$E_{2}^{t}= e_3+e_4$}  &$E_{3}^{t}=te_2+(t-1)e_4$ \\
\multicolumn{3}{|l}{$E_{4}^{t}=e_5$}  &$E_{5}^{t}=-te_4-e_5$ \\
\hline  
$\Q_{5,10}$&$\to$& $\Q_{5,12}^\lambda$ &$E_{1}^{t}=\frac{\lambda^2t^2}{1-\lambda^2}e_1-\frac{\lambda^2t^2}{(\lambda-1)^2(1+\lambda)}e_3$ \\
\multicolumn{3}{|l}{$E_{2}^{t}=\frac{\lambda^3t^4}{(\lambda-1)^2(1+\lambda)}e_3+\frac{\lambda^4t^4}{(\lambda-1)^3(1+\lambda)^2}e_4+\frac{\lambda^5t^4}{(\lambda-1)^4(1+\lambda)^2}e_5 $ } & $E_{3}^{t}=\frac{\lambda^4t^3}{(\lambda^2-1)^2}e_2-\frac{\lambda^3t^3}{(\lambda-1)^2(1+\lambda)}e_3$\\
\multicolumn{3}{|l}{$E_{4}^{t}=\frac{\lambda^6t^6}{(\lambda-1)^4(1+\lambda)^2}e_5$ }  & $E_{5}^{t}=\frac{\lambda^5t^5}{(\lambda^2-1)^3}e_4+\frac{\lambda^5t^5}{(\lambda-1)^4(1+\lambda)^2}e_5$\\
\hline
$\Q_{5,14}^{t-1}$&$\to$& $\Q_{5,15}$ &$E_{1}^{t}=te_1+t^2e_3$\\
\multicolumn{3}{|l}{ $E_{2}^{t}=t^2e_2 $} & $E_{3}^{t}=t(t-1)e_2+e_4$ \\
\multicolumn{3}{|l}{$E_{4}^{t}=-t^3e_3$ }  & $E_{5}^{t}=t^3e_5$ \\
\hline 
$\Q_{5,26}^{1+t}$&$\to$& $\Q_{5,16}$ &$E_{1}^{t}=e_1+e_2$\\
\multicolumn{4}{|l|}{$E_{2}^{t}=\frac{2t^2+3t-3}{2t-1}e_4+\frac{(3 + 3 t + t^2) (2t^2+3t-3)^2}{18 (1 - 2 t)^2 t^2 (1 + t) (2 + t)}e_5$}  \\
\multicolumn{4}{|l|}{$E_{3}^{t}=3t e_2+\frac{2t^2+3t-3}{3t^2(2t^2+3t-2)}e_3-\frac{2 t^4+5 t^3 +2 t^2  -3}{6t^2(2t^2+t-1)}e_4$} \\
\multicolumn{3}{|l}{$E_{4}^{t}=\frac{2t^2+3t-3}{3t}e_3+\frac{3 - 3 t - 2 t^2}{3 t - 6 t^2}e_4-\frac{(3 + 3 t + t^2) (2t^2+3t-3)^3}{54 (1 - 2 t)^2 t^3 (1 + t) (2 + t)}e_5$}&  $E_{5}^{t}=\frac{2t^2+3t-3}{2t-1}e_5$ \\

\hline
$\Q_{5,19}$&$\to$& $\Q_{5,17}$ &$E_{1}^{t}=\frac{1}{2\sqrt[6]{t^5}}e_1-\frac{\sqrt[3]{t}}{2}e_3$\\
\multicolumn{3}{|l}{$E_{2}^{t}=\frac{1}{4\sqrt[3]{t^5}}e_2-\frac{1}{2\sqrt[12]{t^5}}e_3-\frac{1}{4\sqrt[12]{t^{19}}}e_4$} & $E_{3}^{t}= \frac{1}{2\sqrt[3]{t^2}}e_3+\frac{1}{4\sqrt[6]{t^{11}}}e_4$ \\
\multicolumn{3}{|l}{$E_{4}^{t}=\frac{1}{4\sqrt[6]{t^{11}}}e_4-\frac{1}{8t^3}e_5$} & $E_{5}^{t}=\frac{1}{8\sqrt{t^{5}}}e_5$ \\
\hline
 
$\Q_{5,19}$&$\to$& $\Q_{5,18}$ &$E_{1}^{t}=\frac{1}{2t^2}e_1+\frac{1}{2t}e_3$\\
\multicolumn{3}{|l}{$E_{2}^{t}=\frac{1}{4t^4}e_2+\frac{1}{2t}e_3-\frac{1}{4t^2}e_4$} & $E_{3}^{t}= \frac{1}{2t^2}e_3-\frac{1}{2t^3}e_4$ \\
\multicolumn{3}{|l}{$E_{4}^{t}=\frac{1}{4t^4}e_4+\frac{1}{8t^5}e_5$} &$E_{5}^{t}=\frac{1}{8t^6}e_5$ \\
\hline
$\Q_{5,21}$&$\to$& $\Q_{5,25}$ &$E_{1}^{t}=t^2 (t^2-1)e_2+t^2 (t^2-1)e_4$\\
\multicolumn{3}{|l}{$E_{2}^{t}=(t^5-t^3+t^2-1)e_1+(t^2-1)e_2$ } & $E_{3}^{t}= (1-t^2+t^3-t^5)e_4+(1-t^2)e_5$\\
\multicolumn{4}{|l|}{$E_{4}^{t}=-t^2 (t^2-1)^2e_3+(t-t^3+t^4-t^6)e_4+(t-t^2-t^3+2 t^4-t^6)e_5$ } \\  \multicolumn{4}{|l|}{$E_{5}^{t}=(t-1)^2(1+t)^3 (t^2-t^3+t^4)e_5$}\\
\hline
$\Q_{5,26}^{\frac{1}{1+t}}$&$\to$& $\Q_{5,27}$ &$E_{1}^{t}=(1+t)e_1-\frac{t}{2}e_2+\frac{1+t}{2}e_3$\\
\multicolumn{3}{|l}{ $E_{2}^{t}=\sqrt{t}e_2$} & $E_{3}^{t}= te_3+t(1+t)e_4$   \\
\multicolumn{3}{|l}{$E_{4}^{t}=\frac{\sqrt{t}(1+t)(2+t)}{2}e_4$ } & $E_{5}^{t}=\frac{t(2+t)}{2}e_5$ \\
\hline

$\Q_{5,26}^{\frac{\sqrt{-t (8+7 t)}-4 -5t}{\sqrt{-t (8+7 t)}+4+3 t}}$&$\to$& $\Q_{5,28}$ & \\ \multicolumn{4}{|l|}{$E_{1}^{t}=e_1+\frac{\left(\sqrt{-t (8+7 t)}-t\right) \left(4+3 t+\sqrt{-t (8+7 t)}\right)}{16 \sqrt[4]{t} (1+t)}e_2+\frac{\left(4+3 t+\sqrt{-t (8+7 t)}\right)^3}{64(1+t)(\sqrt{-t (8+7 t)}-t)}e_3$}\\
\multicolumn{3}{|l}{$E_{2}^{t}=\frac{4+3 t+\sqrt{-t (8+7 t)}}{4}e_2$ }&  
$E_{3}^{t}= \frac{\left(4+3 t+\sqrt{-t (8+7 t)}\right)^2}{16}e_3+\sqrt[4]{t}(1+t)e_4 $ \\
\multicolumn{3}{|l}{$E_{4}^{t}=(1+t)e_4$} & 
$E_{5}^{t}=\frac{4+3 t+\sqrt{-t (8+7 t)}}{4}e_5$    \\
\hline
$\Q_{5,26}^{\frac{t+h}{1+t}}$&$\to$& $\Q_{5,29},\, \Q_{5,30}$ & $E_{1}^{t}=-\frac{3 i t(1 + h)  }{\sqrt{1+t}}e_2$\\ 
\multicolumn{3}{|l}{$E_{2}^{t}=-\frac{i (1 + t (5 + 2 h))}{3 t (1 + h)\sqrt{ (1 + t) }}e_1+i\sqrt{1 + t}e_2$ }&  
$E_{3}^{t}=-e_3+\frac{1+3t}{3t}e_4 $ \\
\multicolumn{3}{|l}{$E_{4}^{t}=e_4$} & 
$E_{5}^{t}=\frac{i }{\sqrt{1 + t }}e_5$    \\

\multicolumn{4}{|l|}{
$h$ is $\omega$  (for $\Q_{5,29}$)  or $\omega^2$ (for $\Q_{5,30}$), where $\omega$ is the primitive cubic root of unit}\\

\hline
$\Q_{5,26}^{\frac{(1 + L) (t^2-1)}{L-1 + t^2(1 + L)}}$&$\to$&${\mathfrak V}_{2+3}$ & \\
\multicolumn{4}{|l|}{$E_{1}^{t}=\frac{1+L}{2L}e_1+\frac{(t-1)^2 (1 + t) + 2 L (t^3-1) + L^2 (1 + t + t^2 + t^3)}{2 L (1 + L)}e_2-\frac{(L-1 + t^2(1 + L))^3}{8 L^2 t (t^2(1 + L)-1)}e_3$}\\
\multicolumn{4}{|l|}{$E_{2}^{t}=\frac{1}{L}e_1+\frac{(t-1)(L-1 + t^2(1+L))}{L(1 + L)}e_2-\frac{(L-1 + t^2(1+L))^3}{4 t L^2(1+L)((1 + L) t^2-1)}e_3$ } \\
\multicolumn{4}{|l|}{$E_{3}^{t}= \frac{(1-L)(L-1 + t^2(1+L))^2}{2(L+L^2)^2}e_3+ \frac{(1-L)(2+t)}{2L^2}e_4 - \frac{L-1 + t^2(1+L)}{2L^2t}e_5$ } \\
\multicolumn{4}{|l|}{$E_{4}^{t}=\frac{(1-L)(L-1 + t^2(1+L))^2}{2(L+L^2)^2}e_3+ \frac{2+ t + L^2 (2 - t) }{2L^2(1+L)}e_4 - \frac{L-1 + t^2(1+L)}{2L^2t}e_5$}\\
\multicolumn{3}{|l}{$E_{5}^{t}=\frac{(L-1 + t^2(1+L))^2}{(L+L^2)^2}e_3+ \frac{2 + t + L t}{L^2 + L^3}e_4 - \frac{L-1 + t^2(1+L)}{tL^2(1 + L)}e_5;$  }& $\qquad \lambda=\frac{L^2-1}{4}$ \\
\hline
\end{longtable}

For the rest of degenerations, in  case of  $E_1^t,\dots, E_n^t$ is a {\it parametric basis} for ${\bf A}\to {\bf B},$ it will be denote as
${\bf A}\xrightarrow{(E_1^t,\dots, E_n^t)} {\bf B}$.

\begin{longtable}{|lcl|lcl|}
  
\hline
$\Q_{5,2}$ & $  \xrightarrow{ (e_1,e_2,e_4,te_3,e_5)}$ & $\Q_{5,1}$ & 
$\Q_{5,15}$&$ \xrightarrow{ (e_1+e_3,e_2+e_5,e_3,e_5,te_4)}$& $\Q_{5,2}$ \\
\hline

$\Q_{5,11}$& $ \xrightarrow{(e_1,e_2+te_3, e_3,e_4+e_5,\frac{1}{t}e_5)}$ & $\Q_{5,3}$ &

$\Q_{5,10}$&$ \xrightarrow{ (e_1,e_2,t e_3,e_4,te_5)}$& $\Q_{5,4}$ \\
\hline

$\Q_{5,10}$& $\xrightarrow{(-\frac{1}{t}e_1+\frac{1}{t}e_3,\frac{1}{t^2}e_2,\frac{1}{t}e_3,-\frac{1}{t^3}e_4,\frac{1}{t^2}e_5)}$& $\Q_{5,6}$ &
 $\Q_{5,9}$&$\xrightarrow{(e_1,e_2,t e_3,e_4,t^2 e_5)}$& $\Q_{5,7}$ \\
\hline

 $\Q_{5,10}$&$\xrightarrow{(\frac{1}{t}e_1,\frac{1}{t^2}e_2,\frac{1}{t^2}e_3,\frac{1}{t^3}e_4,\frac{1}{t^4}e_5)}$& $\Q_{5,9}$ &

$\Q_{5,10}$&$\xrightarrow{ (-t^2e_1,t^4 e_3,t^3e_2-t^3e_3,t^6e_5,t^5e_4+t^5e_5)}$& $\Q_{5,11}$\\
\hline
$\Q_{5,16}$&$\xrightarrow{(t^2e_1,t^4e_2,t^3 e_3,t^4e_4,t^6 e_5)}$& $\Q_{5,13}$&
$\Q_{5,21}$&$\xrightarrow{(\frac{1+t}{t} e_1+e_2,\frac{1}{t}e_2,\frac{1}{t^2} e_3,\frac{1+t}{t^2}e_4,\frac{1+t}{t^3}e_5)}$& $\Q_{5,20}$ \\
\hline

$\Q_{5,21}$&$\xrightarrow{(te_1,t^2e_2,t^3 e_3+t^3e_5,t^3e_4,t^4e_5)}$& $\Q_{5,22}$&

$\Q_{5,23}$&$\xrightarrow{(t e_1,e_2,t e_3,te_4,t e_5)}$& $\Q_{5,24}$\\
\hline
\end{longtable}

  After a careful  checking  dimensions of orbit closures of the more important algebras for us, we have 

\begin{center}  
$\dim  \mathcal{O}({\mathfrak V}_{3+2})=24,$ \, 
$
\dim \mathcal{O}({\mathfrak V}_{4+1})=20,$\, 
$\dim \mathcal{O}(\Q_{5,10})=20,$\, 
$\dim \mathcal{O}(\Q_{5,14}^{\lambda})=20,$\, 
$\dim \mathcal{O}(\Q_{5,19})=20,$\, 
$\dim \mathcal{O}(\Q_{5,21})=20,$\, 
$\dim \mathcal{O}(\Q_{5,23})=20,$\, 
$\dim \mathcal{O}(\Q_{5,26}^{\lambda})=20.$ \\
\,

 \end{center}
 Hence, 
${\mathfrak V}_{4+1}, \Q_{5,10},  \Q_{5,14}^{\lambda}, \Q_{5,19}, \Q_{5,21}, \Q_{5,23}, \Q_{5,26}^{\lambda}$ and ${\mathfrak V}_{3+2}$ give $8$ irreducible components.

\end{Proof}

\end{document}